\documentclass[a4paper]{article}

\usepackage{amsmath}
\usepackage{amsfonts}
\usepackage{amssymb}
\usepackage{amsthm}

\numberwithin{equation}{section}

\theoremstyle{plain}
\newtheorem{theorem}{Theorem}[section]

\newtheorem{lemma}[theorem]{Lemma}
\newtheorem{proposition}[theorem]{Proposition}
\newtheorem{corollary}[theorem]{Corollary}
\theoremstyle{definition}
\newtheorem{definition}[theorem]{Definition}
\theoremstyle{remark}
\newtheorem{remark}[theorem]{Remark}

\newcounter{counter_a}
\newcounter{counter_b}
\newenvironment{myenum}{\begin{list}{{\rm(\roman{counter_a})}}%
{\usecounter{counter_a}
\setlength{\itemsep}{0.5ex}\setlength{\topsep}{0.7ex}
\setlength{\leftmargin}{5ex}\setlength{\labelwidth}{5ex}}}{\end{list}}

\DeclareMathOperator\Real{Re}
\DeclareMathOperator\Imag{Im}
\renewcommand\Re{\Real}
\renewcommand\Im{\Imag}

\newcommand\cD{\mathcal D}

\newcommand\cG{\mathcal G}
\newcommand\cH{\mathcal H}
\newcommand\cK{\mathcal K}

\newcommand\cS{\mathcal S}
\newcommand\dC{\mathbb C}

\newcommand\CC{\mathbb C}
\newcommand\NN{\mathbb N}
\newcommand\RR{\mathbb R}

\newcommand\eps{\varepsilon}

\newcommand\ov{\overline}

\newcommand{\defeq}{\mathrel{\mathop:}=}

\newcommand\sess{\sigma_{\rm ess}}

\newcommand\LaD{\Delta_{D}^\Omega}
\newcommand\LaN{\Delta_{N}^\Omega}

\DeclareMathOperator\dom{dom}
\DeclareMathOperator\ran{ran}

\newcommand\void[1]{}


\title{A remark on Schatten--von Neumann properties of resolvent differences of generalized 
Robin Laplacians on bounded domains}

\author{
Jussi Behrndt \\
{\small \textit{Institut f\"ur Mathematik, Technische Universit\"at Berlin}} \\
{\small \textit{Stra\ss{}e des 17.\ Juni 136, D-10623 Berlin, Germany}} \\
{\small \textit{E-mail:} \texttt{behrndt@math.tu-berlin.de}}
\and
Matthias Langer \\
{\small\textit{Department of Mathematics and Statistics,
University of Strathclyde}} \\
{\small \emph{26 Richmond Street, Glasgow G1 1XH, United Kingdom}} \\
{\small \textit{E-mail:} \texttt{m.langer@strath.ac.uk}}
\and
Igor Lobanov \\[0.5ex]
Vladimir Lotoreichik \\[0.5ex]
Igor Yu.\ Popov \\
{\small \textit{ Department of Mathematics}} \\
{\small \textit{St.~Petersburg State University of
		Information Technologies, Mechanics and Optics\hspace*{-3ex}}} \\
{\small \textit{Kronverkskiy, 49,  St.\ Petersburg, Russia}} \\
{\small \textit{E-mails:} \texttt{lobanov.igor@gmail.com, vladimir.lotoreichik@gmail.com,}} \\
{\small \texttt{popov@mail.ifmo.ru}}
}

\begin{document}

\maketitle

\begin{center}
\textit{In memory of M.\,Sh.~Birman
(1928--2009)}
\end{center}

\begin{abstract}
In this note we investigate the asymptotic behaviour of the $s$-numbers of the 
resolvent difference of two generalized self-adjoint, maximal dissipative or 
maximal accumulative Robin Laplacians on a bounded domain $\Omega$ with smooth boundary $\partial\Omega$. 
For this we apply the recently introduced abstract notion of quasi boundary 
triples and Weyl functions from extension theory of symmetric operators together 
with Krein type resolvent formulae and well-known eigenvalue asymptotics of the 
Laplace--Beltrami operator on $\partial\Omega$. It will be shown that the 
resolvent difference of two generalized Robin Laplacians belongs to the 
Schatten--von Neumann class of any order $p$ for which
\begin{equation*}
p > \frac{\dim \Omega - 1}{3}\,.
\end{equation*}
Moreover, we also give a simple sufficient condition for the resolvent difference of two generalized
Robin Laplacians to belong to a Schatten--von Neumann class of arbitrary small order. 
Our results extend and complement classical theorems due to M.\,Sh.~Birman on Schatten--von Neumann properties
of the resolvent differences of Dirichlet, Neumann and self-adjoint Robin Laplacians.
\smallskip


\noindent
\textbf{Keywords:}
Laplacian, self-adjoint extension, quasi boundary triple, Weyl function, Krein's formula, 
non-local boundary condition, Schatten--von Neumann class, singular numbers.
\end{abstract}

%
\section{Introduction}
\label{sec:intro}
It is well known that the difference of the resolvents of two self-adjoint
extensions of a symmetric operator (with equal infinite deficiency numbers)
usually behaves `better' than the resolvents themselves, e.g.\
even if the resolvents are non-compact operators, the difference may belong
to a Schatten--von Neumann class, or if the resolvents are from
a Schatten--von Neumann class, the difference may lie in one of smaller order.
In particular, according to classical results due to M.\,Sh.~Birman~\cite{Bi}  
the resolvent difference
of the Dirichlet and Neumann Laplacian in a bounded or unbounded domain $\Omega$
with compact $C^{\infty}$ boundary $\partial\Omega$ satisfies
\begin{equation*}
  (-\LaD-\lambda)^{-1} - (-\LaN-\lambda)^{-1}
  \in \cS_p(L^2(\Omega)),
  \qquad \forall\, p > \frac{\dim \Omega - 1}{2},
\end{equation*}
where $\cS_p(L^2(\Omega))$ is the Schatten--von Neumann class of order $p$
and $\LaD$, $\LaN$ are the Dirichlet and Neumann Laplacians on $\Omega$, respectively.
Analogous estimates were also obtained for the difference of the resolvents of self-adjoint 
Laplacians with (ordinary) Robin boundary conditions $\beta f\vert_{\partial\Omega}=\frac{\partial f}{\partial\nu}$,
where $\beta$ is a real-valued function on $\partial\Omega$ and 
$\frac{\partial}{\partial \nu}$ denotes the outer normal
derivative. Later such results on spectral asymptotics were refined and generalized by, e.g.\
M.\,Sh.~Birman and M.\,Z.~Solomjak in~\cite{BiS} and G.~Grubb in \cite{Gr-1}.
Recently some new Schatten--von Neumann properties of resolvent differences of differential operators
were announced by F.~Gesztesy and M.\,M.~Malamud in~\cite{GMa}, and
in the paper by G.~Grubb~\cite{Gr-5} the influence of generalized Robin boundary conditions
on the essential spectrum
in exterior domains was studied.

The main objective of the present paper is to extend and complement some results on
Schatten--von Neumann properties for the resolvent difference of self-adjoint Laplacians
from \cite{Bi}. Instead of Dirichlet, Neumann and self-adjoint Robin Laplacians we study
so-called \emph{generalized} Robin Laplacians which are self-adjoint, maximal dissipative or maximal accumulative.
More precisely, we study self-adjoint, maximal dissipative and maximal accumulative realizations $-\Delta_{\Theta_1}^{\Omega}$ and $-\Delta_{\Theta_2}^{\Omega}$ of the
Laplacian corresponding to the generalized (or non-local) Robin boundary conditions 
\[
  \Theta_1 \frac{\partial f}{\partial \nu}\Big|_{\partial\Omega} = f\big|_{\partial\Omega} \qquad\text{and}\qquad
  \Theta_2 \frac{\partial f}{\partial \nu}\Big|_{\partial\Omega} = f\big|_{\partial\Omega},
\]
respectively, where $\Theta_1$ and $\Theta_2$ are self-adjoint, maximal dissipative or maximal accumulative operators in $L^2(\partial\Omega)$
such that $0 \notin \sess(\Theta_i)$, $i=1,2$. We note that generalized self-adjoint Robin Laplacians were recently also considered by F.~Gesztesy and M.~Mitrea
in \cite{GMi-1,GMi-2,GMi-3,GMi-4}. It is shown in Theorem~\ref{thm:1} and Corollary~\ref{cor:1} that 
\begin{equation}
\label{eq:robindiff1}
  (-\Delta_{\Theta_1}^{\Omega} -\lambda)^{-1} - (-\Delta_{\Theta_2}^{\Omega} - \lambda)^{-1} \in \cS_p(L^2(\Omega)),
  \qquad \forall\, p > \frac{\dim \Omega - 1}{3},
\end{equation}
holds for all $\lambda \in \rho(-\Delta_{\Theta_1}^{\Omega})\cap\rho(-\Delta_{\Theta_2}^{\Omega})$.
Moreover, if $\Theta_1-\Theta_2 \in \cS_{p_0}(L^2(\Omega))$
for some $p_0\in (0,\infty)$, then
\begin{equation}
\label{eq:robindiff2}
  (-\Delta_{\Theta_1}^{\Omega} -\lambda)^{-1} - (-\Delta_{\Theta_2}^{\Omega} - \lambda)^{-1} \in \cS_p(L^2(\Omega)),
  \quad \forall\, p > \frac{(\dim \Omega -1)p_0}{(\dim \Omega - 1)+3p_0 }\,;
\end{equation}
see Theorem~\ref{thm:3}. The proofs of these estimates are quite elementary and short when applying the abstract 
concept of quasi boundary triples and Weyl functions from extension theory of symmetric operators together
with Krein type resolvent formulae from \cite{BL} and well-known eigenvalue asymptotics of the Laplace--Beltrami
operator on $\partial\Omega$; see, e.g.\ \cite{Ag}. 
We note that our main results \eqref{eq:robindiff1} and
\eqref{eq:robindiff2} can be proved in the same way for generalized Robin Schr\"{o}dinger operators $-\Delta_{\Theta_i}^{\Omega}+V$
with a real valued $L^\infty$ potential $V$ or for more general
uniformly elliptic differential operators with coefficients satisfying appropriate conditions.

\section{Quasi boundary triples}

In this section we briefly recall the abstract notion of quasi boundary triples and Weyl functions in extension theory of symmetric operators, 
some of their properties and how
they can be applied to the Laplacian on bounded domains.
This concept  was introduced in connection with elliptic boundary value problems 
by the first two authors in \cite{BL} as a generalization of the
notion of ordinary and generalized boundary triples from \cite{BrGeP,DHMaSn,DMa-1,DMa-2,Ma}.
The following definition is a variant of \cite[Definition 2.1]{BL} for densely defined, closed,
symmetric operators.

\begin{definition}
\label{def:qbt}
Let $A$ be a densely defined, closed, symmetric operator in a Hilbert space $\cH$.
We say that $(\cG, \Gamma_0, \Gamma_1)$ is a \emph{quasi boundary triple}
for $A^*$ if $\cG$ is a Hilbert space, $\Gamma_0$ and $\Gamma_1$ are linear
mappings defined on the same subset $\dom\Gamma_0=\dom\Gamma_1$ of $\dom A^*$ with values in $\cG$
such that $T \defeq A^*|_{\dom\Gamma_0}$ satisfies $\ov{T} = A^*$,
that $\binom{\Gamma_0}{\Gamma_1}\colon\dom T \to \cG\times\cG$ has dense range,
that $A_0 \defeq T|_{\ker\Gamma_0}$ is self-adjoint and that the identity
\begin{equation*}
\label{g.i.}
  (Tf, g)_{\cH} - (f, Tg)_{\cH} = (\Gamma_1 f, \Gamma_0 g)_{\cG} - (\Gamma_0 f, \Gamma_1 g)_{\cG}
\end{equation*}
holds for all $f, g \in \dom T$.
\end{definition}

From the definition it follows that both $\ran \Gamma_0$ and $\ran \Gamma_1$ are
dense in $\cG$.  Moreover, one can easily show that $\Gamma_0|_{\ker{(T-\lambda)}}$
is bijective from $\ker(T-\lambda)$ onto $\ran\Gamma_0$ for $\lambda\in\rho(A_0)$.
Next we recall the definition of the $\gamma$-field, the Weyl function and the 
parameterization of certain extensions of the symmetric operator $A$.

\begin{definition}
\label{def:gfield:weyl}
Let $A$ be a densely defined, closed, symmetric operator in a Hilbert space,
$(\cG,\Gamma_0,\Gamma_1)$ a quasi boundary triple for $A^*$ and $T$ as above.
\begin{myenum}
\item
The bijective mapping
\begin{equation*}
  \gamma(\lambda) \defeq (\Gamma_0|_{\ker{(T-\lambda)}})^{-1} \colon
  \ran \Gamma_0 \to \ker(T - \lambda), \qquad
  \lambda \in \rho(A_0),
\end{equation*}
is called \emph{$\gamma$-field}.
\item
The mapping
\begin{equation*}
M(\lambda) \defeq \Gamma_1 \gamma(\lambda), \qquad \lambda \in \rho(A_0),
\end{equation*}
is called \emph{Weyl function}.
\item
For a linear operator $\Theta$ in $\cG$, let $A_\Theta$ be the restriction of $T$ to
the set
\begin{equation*}
  \dom A_\Theta \defeq \big\{f\in \dom T \colon \Gamma_1 f = \Theta\Gamma_0 f\big\}.
\end{equation*}
\end{myenum}
\end{definition}

\noindent
We gather in one proposition some facts about the $\gamma$-field, the Weyl function
and $A_\Theta$ which were proved in \cite[Proposition~2.6 and Theorem~2.8]{BL}.

\begin{proposition}
\label{pro:gamma_Weyl}
Let $A$ be a densely defined, closed, symmetric operator in a Hilbert space and
let $(\cG, \Gamma_0, \Gamma_1)$ be a quasi boundary triple for $A^*$ with
$\gamma$-field $\gamma$ and Weyl function $M$.
For $\lambda \in \rho(A_0)$ the following assertions hold.
\begin{myenum}
\item 
$\gamma(\lambda)$ is a densely defined bounded operator from $\cG$ to $\cH$
with $\dom \gamma(\lambda) = \ran \Gamma_0$.
\item 
$\gamma(\overline{\lambda})^*$ is  a bounded mapping defined on $\cH$ with
values in $\ran \Gamma_1 \subset \cG$, and
\begin{equation}
\label{eq:gamma_bus}
  \gamma(\overline{\lambda})^* = \Gamma_1(A_0 - \lambda)^{-1}
\end{equation}
holds.
\item 
$M(\lambda)$ maps $\ran \Gamma_0$ into $\ran \Gamma_1$. If, in addition, $T|_{\ker \Gamma_1}$ is
self-adjoint in $\cH$ and $\lambda \in \rho(T|_{\ker \Gamma_1})$, then $M(\lambda)$
maps $\ran \Gamma_0$ onto $\ran \Gamma_1$.
\item 
For $\lambda \in \dC^+ \;(\text{or }\dC^-)$,
where $\dC^\pm \defeq \{z\in\dC\colon\pm \mathrm{Im}\,z>0\}$, the operator
\[
  \Im M(\lambda) \defeq \frac{1}{2i}\big(M(\lambda)-M(\lambda)^*\big)
\]
is bounded and positive (negative, respectively).
\item 
Let $\Theta$ be a linear operator in $\cG$.  Then $\lambda$ is an eigenvalue of $A_\Theta$
if and only if $0$ is an eigenvalue of $\Theta-M(\lambda)$.  
If $\lambda$ is not an eigenvalue of $A_\Theta$, then Krein's formula
\begin{equation}
\label{krein_gen}
  (A_\Theta-\lambda)^{-1}f = (A_0-\lambda)^{-1}f
  + \gamma(\lambda)\big(\Theta-M(\lambda)\big)^{-1}\gamma(\ov\lambda)^*f
\end{equation}
holds for every $f\in\cH$ for which $\gamma(\ov\lambda)^*f \in \ran\big(\Theta-M(\lambda)\big)$.
\end{myenum}
\end{proposition}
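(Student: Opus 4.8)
The plan is to extract everything from Green's identity and the self-adjointness of $A_0$, with the only slightly delicate point — the boundedness of the $\gamma$-field — obtained by an adjoint argument rather than by hand. Fix $\lambda\in\rho(A_0)$; since $A_0=A_0^*$ we also have $\bar\lambda\in\rho(A_0)$. I would first record two routine facts. (a) $\ran\Gamma_0$ and $\ran\Gamma_1$ are dense in $\cG$: project the dense range of $\binom{\Gamma_0}{\Gamma_1}\colon\dom T\to\cG\times\cG$ onto the two factors. (b) There is an algebraic direct sum decomposition $\dom T=\dom A_0\dotplus\ker(T-\lambda)$, in which $f\in\dom T$ is written as $f=(A_0-\lambda)^{-1}(T-\lambda)f+f_\lambda$ with $f_\lambda\in\ker(T-\lambda)$; uniqueness follows from $\ker(A_0-\lambda)=\{0\}$. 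In particular $\Gamma_0|_{\ker(T-\lambda)}$ is a bijection onto $\ran\Gamma_0$, so $\gamma(\lambda)$ is well defined with $\dom\gamma(\lambda)=\ran\Gamma_0$ and $\gamma(\lambda)\Gamma_0 f=f-(A_0-\lambda)^{-1}(T-\lambda)f$ for $f\in\dom T$.

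For (ii) and the boundedness claim in (i), insert $f=\gamma(\lambda)\varphi$ (so $Tf=\lambda f$, $\Gamma_0 f=\varphi$) and $g=(A_0-\bar\lambda)^{-1}h\in\dom A_0$ (so $Tg=A_0 g$, $\Gamma_0 g=0$) into Green's identity. The left-hand side reduces to $-(f,h)_\cH$ and the right-hand side to $-(\varphi,\Gamma_1(A_0-\bar\lambda)^{-1}h)_\cG$, so
\begin{equation*}
  (\gamma(\lambda)\varphi,h)_\cH=\bigl(\varphi,\Gamma_1(A_0-\bar\lambda)^{-1}h\bigr)_\cG,
  \qquad \varphi\in\ran\Gamma_0,\ h\in\cH.
\end{equation*}
Since $\ran\Gamma_0$ is dense, $\gamma(\lambda)^*$ is a single-valued closed operator, and the identity shows every $h\in\cH$ lies in $\dom\gamma(\lambda)^*$ with $\gamma(\lambda)^*h=\Gamma_1(A_0-\bar\lambda)^{-1}h$. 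Thus $\gamma(\lambda)^*=\Gamma_1(A_0-\bar\lambda)^{-1}$ is closed and everywhere defined, hence bounded, with range in $\ran\Gamma_1$; replacing $\lambda$ by $\bar\lambda$ yields \eqref{eq:gamma_bus}. Finally $\gamma(\lambda)$ is contained in the bounded everywhere-defined operator $(\gamma(\lambda)^*)^*$, so $\gamma(\lambda)$ is bounded. I expect this adjoint trick to be the main obstacle; the remaining parts are formal consequences.

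For (iii): $\gamma(\lambda)$ maps into $\ker(T-\lambda)\subseteq\dom\Gamma_1$, so $M(\lambda)=\Gamma_1\gamma(\lambda)$ sends $\ran\Gamma_0$ into $\ran\Gamma_1$; and if $A_1\defeq T|_{\ker\Gamma_1}$ is self-adjoint with $\lambda\in\rho(A_1)$, then for $\psi=\Gamma_1 v\in\ran\Gamma_1$ the splitting $v=(A_1-\lambda)^{-1}(T-\lambda)v+v_\lambda$, $v_\lambda\in\ker(T-\lambda)$, gives $\psi=\Gamma_1 v_\lambda=M(\lambda)\Gamma_0 v_\lambda$, so $M(\lambda)$ is onto $\ran\Gamma_1$. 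For (iv): Green's identity with $f=\gamma(\lambda)\varphi$, $g=\gamma(\lambda)\psi$ gives $(M(\lambda)\varphi,\psi)_\cG-(\varphi,M(\lambda)\psi)_\cG=(\lambda-\bar\lambda)\bigl(\gamma(\lambda)^*\gamma(\lambda)\varphi,\psi\bigr)_\cG$; rearranging shows $\psi\in\dom M(\lambda)^*$ with $\tfrac{1}{2i}\bigl(M(\lambda)-M(\lambda)^*\bigr)\psi=\Im\lambda\,\gamma(\lambda)^*\gamma(\lambda)\psi$ on $\ran\Gamma_0$, so $\Im M(\lambda)$ extends to the bounded operator $\Im\lambda\,\gamma(\lambda)^*\gamma(\lambda)$, which is positive for $\lambda\in\dC^+$ and negative for $\lambda\in\dC^-$.

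For (v): if $A_\Theta f=\lambda f$ with $f\neq0$, then $f\in\ker(T-\lambda)$ and $\Gamma_0 f\neq0$ (otherwise $f\in\ker(A_0-\lambda)=\{0\}$), while $\Gamma_1 f=\Theta\Gamma_0 f$ becomes $(\Theta-M(\lambda))\Gamma_0 f=0$; conversely $(\Theta-M(\lambda))\varphi=0$ with $\varphi\neq0$ (necessarily $\varphi\in\ran\Gamma_0\cap\dom\Theta$) produces the eigenvector $\gamma(\lambda)\varphi$. For Krein's formula, assume $\lambda\notin\sigma_p(A_\Theta)$ and put $e\defeq(\Theta-M(\lambda))^{-1}\gamma(\bar\lambda)^*f$ and $u\defeq(A_0-\lambda)^{-1}f+\gamma(\lambda)e$; then $\Gamma_0 u=e$, and using \eqref{eq:gamma_bus} together with $(\Theta-M(\lambda))e=\gamma(\bar\lambda)^*f$ one gets $\Gamma_1 u=\gamma(\bar\lambda)^*f+M(\lambda)e=\Theta e=\Theta\Gamma_0 u$, so $u\in\dom A_\Theta$ and $(A_\Theta-\lambda)u=(T-\lambda)u=f$. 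Injectivity of $A_\Theta-\lambda$ (by the eigenvalue criterion just proved) identifies $u$ with $(A_\Theta-\lambda)^{-1}f$, which is the asserted formula.
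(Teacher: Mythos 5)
Your proof is correct. Note that the paper itself does not prove Proposition~\ref{pro:gamma_Weyl}: it only cites \cite[Proposition~2.6 and Theorem~2.8]{BL}, so there is no in-text argument to compare with; your self-contained derivation follows exactly the standard route of that reference — the decomposition $\dom T=\dom A_0\dotplus\ker(T-\lambda)$, Green's identity applied once with one entry in $\ker(T-\lambda)$ and the other in $\dom A_0$ to obtain $\gamma(\lambda)^*=\Gamma_1(A_0-\bar\lambda)^{-1}$, boundedness of $\gamma(\lambda)$ via the everywhere-defined closed (hence bounded) adjoint and the inclusion $\gamma(\lambda)\subseteq\gamma(\lambda)^{**}$, the analogous splitting with respect to $T|_{\ker\Gamma_1}$ for surjectivity of $M(\lambda)$, and the direct verification of Krein's formula using injectivity of $\Theta-M(\lambda)$ and of $A_\Theta-\lambda$. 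Two cosmetic points: in (iv), $\Im M(\lambda)$ does not so much ``extend to'' as coincide on its domain $\ran\Gamma_0$ with the restriction of the bounded operator $\Im\lambda\,\gamma(\lambda)^*\gamma(\lambda)$, which is what yields boundedness; and if ``positive'' is read in the strict sense in which it is later used (proof of Proposition~\ref{pro:bl}~(i)), add the one-line observation that $\gamma(\lambda)$ is injective, being the inverse of $\Gamma_0|_{\ker(T-\lambda)}$, so that $(\Im M(\lambda)\psi,\psi)_\cG=\Im\lambda\,\|\gamma(\lambda)\psi\|^2_\cH>0$ for $\psi\neq0$ when $\lambda\in\dC^+$.
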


In the following we recall how the concept of quasi boundary triples can be applied to
the Laplace operator on a bounded domain with $C^\infty$ boundary; cf. \cite[Section 4.2]{BL}.
We refer the reader to \cite{GMi-1,GMi-2,GMi-3,Gr-4} for recent work on the Laplacian and elliptic operators 
in non-smooth domains, and to \cite{BrGrW,Gr-1,GMa} for a different approach that leads to an ordinary boundary triple.
Let $\Omega\subseteq\RR^n$, $n>1$, be a bounded domain with $C^\infty$ boundary $\partial\Omega$, let $\nu(x)$ be 
the normal vector at the point $x\in\partial\Omega$ pointing outwards and
consider the differential expression $-\Delta$ on $\Omega$.
The operator $A$ defined by
\begin{equation*}
  Af = -\Delta f  , \qquad
  \dom A = H^2_0(\Omega) = \bigg\{ f \in H^2(\Omega) \colon
  f|_{\partial\Omega} = \frac{\partial f}{\partial \nu}\Big|_{\partial\Omega} = 0 \bigg\},
\end{equation*}
where $f|_{\partial\Omega}$ is the trace of $f$ and
\begin{equation*}
\frac{\partial f}{\partial \nu}\Big|_{\partial\Omega} = \sum_{i=1}^n \nu_i\frac{\partial f}{\partial x_i}\Big|_{\partial\Omega}
\end{equation*}
is the outer normal derivative, is a densely defined, closed, symmetric operator with equal infinite deficiency indices 
in $L^2(\Omega)$.  The adjoint of $A$ is
\begin{equation*}
  A^*f = -\Delta f, \qquad \dom A^* = \big\{f \in L^2(\Omega)\colon -\Delta f \in L^2(\Omega)\big\}.
\end{equation*}
We consider a restriction $T$ of $A^*$ so that we can define boundary mappings on $\dom T$.
As in \cite{BL} we use as domain of $T$ a Beals space, which turns out to be very convenient.
Let us recall its definition; for further details see, e.g.\ \cite{Be}.
Since $\partial\Omega$ is a $C^\infty$ boundary of $\Omega$, there exists $\eps_0 >0$ such that
for all $0 \le \eps < \eps_0$ the mapping $x \mapsto x -\eps \nu(x)$ is a homeomorphism
from $\partial\Omega$ onto $\{x - \eps \nu(x)\colon  x\in\partial\Omega\}$.
If $f \in L^2(\Omega)$ and $-\Delta f \in L^2(\Omega)$, then $f \in H^2_{\rm loc}(\Omega)$.
Hence $f_\eps$ defined by $f_\eps(x) \defeq f(x - \eps \nu(x))$ is in $L^2(\partial\Omega)$.
We say that $f$ has $L^2$ \emph{boundary value on} $\partial\Omega$
if $\lim_{\eps \to 0+} f_\eps$ exists as a limit in $L^2(\partial\Omega)$.
In this case we write $f|_{\partial\Omega} \defeq \lim_{\eps \to 0+} f_\eps$.

\begin{definition}
The \emph{Beals space of first order} is defined as
\begin{multline*}
  \cD_1(\Omega) \defeq \Big\{f \in L^2(\Omega) \colon -\Delta f \in L^2(\Omega),\text{and }\\
  f,\frac{\partial f}{\partial x_i}
  \text{ have $L^2$ boundary values on $\partial\Omega$ for all } i=1,\dots,n\Big\}.
\end{multline*}
\end{definition}

It is known (see \cite{Be}) that $H^2(\Omega) \subset \cD_1(\Omega) \subset H^\frac32(\Omega)$.
We define the operator $T$,
\[
  Tf = -\Delta f , \quad \dom T = \cD_1(\Omega),
\]
and the boundary mappings
\begin{equation*}
 \begin{split}
  \Gamma_0 \colon \dom T \mapsto L^2(\partial\Omega),& \qquad
    \Gamma_0 f = \frac{\partial f}{\partial \nu}\Big|_{\partial\Omega}, \\
  \Gamma_1 \colon \dom T \mapsto L^2(\partial\Omega),&\qquad \Gamma_1 f = f|_{\partial\Omega}.
 \end{split}
\end{equation*}
The restrictions
\[
  -\LaN \defeq T|_{\ker \Gamma_0}, \qquad
  -\LaD \defeq T|_{\ker \Gamma_1}
\]
are the usual Neumann and Dirichlet Laplacians whose domains are both contained
in $H^2(\Omega)$; moreover, $T|_{\ker \Gamma_0\cap\ker\Gamma_1} = A$.
Fundamental properties of Beals spaces imply that
\begin{equation*}
\ran \Gamma_0 = L^2(\partial\Omega), \quad \quad \ran \Gamma_1 = H^1(\partial\Omega).
\end{equation*}
In \cite{BL} it was shown that the triple $(L^2(\partial\Omega), \Gamma_0, \Gamma_1)$
is a quasi boundary triple for $A^*$.

In the next proposition Krein's formula is recalled, and a class of self-adjoint, maximal dissipative
and maximal accumulative generalized Robin Laplacians is parameterized with the help of the quasi 
boundary triple $(L^2(\partial\Omega), \Gamma_0, \Gamma_1)$. Recall that a linear operator $\Theta$ in
a Hilbert space is said to be \emph{dissipative} 
(\emph{accumulative}) if $\Im(\Theta f,f)\geq 0$
($\Im(\Theta f,f )\leq 0$, respectively) for all $f\in\dom\Theta$, and $\Theta$ is said to be \emph{maximal dissipative}
(\emph{maximal accumulative}) if $\Theta$ is dissipative (accumulative, respectively) and has no proper dissipative
(accumulative, respectively) extension. A dissipative (accumulative) operator $\Theta$ 
is maximal dissipative (maximal accumulative, respectively) if and only if $\Theta - \lambda_-$ ($\Theta - \lambda_+$, respectively) 
is surjective for some (and hence for all) $\lambda_-\in\CC^-$ ($\lambda_+\in\CC^+$, respectively).

\begin{proposition}
\label{pro:bl}
Let $T = -\Delta|_{\cD_1(\Omega)}$, $(L^2(\partial\Omega),\Gamma_0,\Gamma_1)$,
$\LaN$, $\LaD$ be as above and denote by $\gamma$ and $M$ the corresponding
$\gamma$-field and Weyl function.
Then the following assertions hold.
\begin{myenum}
\item
For $\lambda\in\CC\setminus\RR$, the operator $M(\lambda)$ is compact in $L^2(\partial\Omega)$
and $M(\lambda)^{-1}$ is a bounded operator from $H^1(\partial\Omega)$ onto $L^2(\partial\Omega)$.
\item
Krein's formula
\begin{equation}
\label{krein_DN}
  (-\LaD-\lambda)^{-1} - (-\LaN-\lambda)^{-1}
  = -\gamma(\lambda)M(\lambda)^{-1}\gamma(\ov{\lambda})^*
\end{equation}
holds for $\lambda\in\CC\setminus\RR$.
\setcounter{counter_b}{\value{counter_a}}
\end{myenum}
Further, let $\Theta$ be a self-adjoint (maximal dissipative, maximal accumulative) 
operator in $L^2(\partial\Omega)$ such that
$0\notin\sess(\Theta)$.  Then also the following statements are true.
\begin{myenum}
\setcounter{counter_a}{\value{counter_b}}
\item
For all $\lambda\in\CC\setminus\RR$ ($\lambda\in\CC^-$, $\lambda\in\CC^+$, respectively) the operator $\big(\Theta-M(\lambda)\big)^{-1}$ is bounded and everywhere defined
in $L^2(\partial\Omega)$.
\item
Denote by $-\Delta_\Theta^\Omega$ the restriction of $T$ to
\[
  \dom(-\Delta_\Theta^\Omega) = \big\{f\in\cD_1(\Omega) \colon \Gamma_1 f = \Theta \Gamma_0 f\big\}.
\]
Then $-\Delta_\Theta^\Omega$ is self-adjoint (maximal dissipative, maximal accumulative, respectively) 
in $L^2(\Omega)$, and Krein's formula
\[
  (-\Delta_\Theta^\Omega-\lambda)^{-1} - (-\LaN-\lambda)^{-1}
  = \gamma(\lambda)\big(\Theta-M(\lambda)\big)^{-1}\gamma(\ov\lambda)^*
\]
holds for $\lambda\in\CC\setminus\RR$ ($\lambda\in\CC^-$, $\lambda\in\CC^+$, respectively).
\end{myenum}
\end{proposition}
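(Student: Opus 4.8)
The plan is to dispose of the four assertions in turn, using Proposition~\ref{pro:gamma_Weyl}, the explicit quasi boundary triple $(L^2(\partial\Omega),\Gamma_0,\Gamma_1)$ and the abstract Green identity of Definition~\ref{def:qbt} as the only real inputs beyond standard elliptic estimates and soft functional analysis. For~(i) I would start from the fact that $\gamma(\lambda)$ maps $L^2(\partial\Omega)$ \emph{boundedly} into $H^{3/2}(\Omega)$ --- this is elliptic regularity for the Neumann problem and is already contained in the analysis of~\cite{BL} --- so that composing with the bounded trace map $H^{3/2}(\Omega)\to H^1(\partial\Omega)$ shows $M(\lambda)=\Gamma_1\gamma(\lambda)$ is bounded from $L^2(\partial\Omega)$ into $H^1(\partial\Omega)$; compactness of $M(\lambda)$ in $L^2(\partial\Omega)$ then follows at once from the compact Rellich embedding $H^1(\partial\Omega)\hookrightarrow L^2(\partial\Omega)$. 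For the inverse, $M(\lambda)$ is onto $H^1(\partial\Omega)=\ran\Gamma_1$ by Proposition~\ref{pro:gamma_Weyl}(iii) applied with $T|_{\ker\Gamma_1}=-\LaD$ (self-adjoint, and $\lambda\in\CC\setminus\RR\subset\rho(-\LaD)$), and it is injective because $M(\lambda)\varphi=0$ puts $\gamma(\lambda)\varphi$ into $\ker\Gamma_1\cap\ker(T-\lambda)=\ker(-\LaD-\lambda)=\{0\}$, forcing $\varphi=0$; thus $M(\lambda)$ is a bounded bijection from $L^2(\partial\Omega)$ onto $H^1(\partial\Omega)$ and the open mapping theorem gives boundedness of $M(\lambda)^{-1}$.

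For~(ii) I would note that $-\LaD=T|_{\ker\Gamma_1}$ is the operator $A_\Theta$ with $\Theta=0$ in the sense of Definition~\ref{def:gfield:weyl}(iii) while $-\LaN=A_0$, and that by Proposition~\ref{pro:gamma_Weyl}(ii) the range of $\gamma(\ov\lambda)^*=\Gamma_1(-\LaN-\lambda)^{-1}$ lies in $\ran\Gamma_1=H^1(\partial\Omega)=\ran M(\lambda)=\ran(0-M(\lambda))$; since moreover $\lambda\in\CC\setminus\RR$ is not an eigenvalue of the self-adjoint operator $-\LaD$, Krein's formula~\eqref{krein_gen} with $\Theta=0$ applies to every $f\in L^2(\Omega)$ and is precisely~\eqref{krein_DN}. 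The substantive part is~(iii), where I would establish two properties of the closed operator $\Theta-M(\lambda)$ (with domain $\dom\Theta$). \emph{Injectivity}: inserting $f=g=\gamma(\lambda)\varphi$ into the Green identity yields $\Im(M(\lambda)\varphi,\varphi)=(\Im\lambda)\|\gamma(\lambda)\varphi\|^2$, so if $(\Theta-M(\lambda))\varphi=0$ then, comparing imaginary parts in $(\Theta\varphi,\varphi)=(M(\lambda)\varphi,\varphi)$ and pitting the sign of $\Im(\Theta\varphi,\varphi)$ (zero, $\geq0$, or $\leq0$ as $\Theta$ is self-adjoint, dissipative, or accumulative) against the sign of $(\Im\lambda)\|\gamma(\lambda)\varphi\|^2$ on the appropriate half-plane, one is forced to $\gamma(\lambda)\varphi=0$ and hence $\varphi=0$.

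\emph{Fredholmness of index zero}: $0\notin\sess(\Theta)$ makes $\Theta$ a Fredholm operator, and since points of $\rho(\Theta)$ lie arbitrarily close to $0$ --- directly if $0\in\rho(\Theta)$, otherwise through $\CC^-$ (resp.\ $\CC^+$) in the maximal dissipative (resp.\ accumulative) case --- local constancy of the Fredholm index forces $\ind\Theta=0$; as $M(\lambda)$ is compact by~(i), $\Theta-M(\lambda)$ remains Fredholm of index $0$. An injective Fredholm operator of index zero is bijective, so $(\Theta-M(\lambda))^{-1}$ is everywhere defined, and it is bounded by the closed graph theorem. With~(iii) in hand~(iv) is short: since $\ran(\Theta-M(\lambda))=L^2(\partial\Omega)$, the range hypothesis in Proposition~\ref{pro:gamma_Weyl}(v) holds for all $f$, $\lambda$ is not an eigenvalue of $A_\Theta=-\Delta_\Theta^\Omega$, and Krein's formula~\eqref{krein_gen} holds for every $f\in L^2(\Omega)$, exhibiting a bounded everywhere-defined inverse of $-\Delta_\Theta^\Omega-\lambda$; hence every $\lambda$ in the relevant set ($\CC\setminus\RR$, $\CC^-$, or $\CC^+$) lies in $\rho(-\Delta_\Theta^\Omega)$. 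Symmetry (resp.\ dissipativity, accumulativity) of $-\Delta_\Theta^\Omega$ comes again from the Green identity, which collapses $(Tf,g)-(f,Tg)$ for $f,g\in\dom(-\Delta_\Theta^\Omega)$ to $(\Theta\Gamma_0f,\Gamma_0g)-(\Gamma_0f,\Theta\Gamma_0g)$ and in particular $\Im(Tf,f)$ to $\Im(\Theta\Gamma_0f,\Gamma_0f)$; combined with the surjectivity just proved and the maximality criterion recalled before the proposition this yields self-adjointness (maximal dissipativity, maximal accumulativity).

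The step I expect to be the main obstacle is the index computation inside~(iii): one must fix a notion of essential spectrum under which $0\notin\sess(\Theta)$ really does imply that $\Theta$ is Fredholm, and then deduce index zero in all three cases from local constancy of the index together with the location of $\rho(\Theta)$; the injectivity argument likewise has to be run with the half-plane correctly matched to the type of $\Theta$. Everything else is essentially bookkeeping with Proposition~\ref{pro:gamma_Weyl} and the Green identity.
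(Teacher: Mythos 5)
Your argument is correct, but it follows a genuinely different route from the paper, which here is essentially a citation: compactness of $M(\lambda)$ and the fact that $M(\lambda)$ maps onto $H^1(\partial\Omega)$ are taken from \cite[Proposition~4.6]{BL}, injectivity is obtained from the strict positivity of $\Im M(\lambda)$ in Proposition~\ref{pro:gamma_Weyl}~(iv), and assertions (iii) and (iv) are not proved at all but quoted from \cite[Theorems~4.8 and 4.10]{BL}. You instead reprove (i) from the mapping property $\gamma(\lambda)\colon L^2(\partial\Omega)\to H^{3/2}(\Omega)$ (which indeed follows from $\ran\gamma(\lambda)\subset\cD_1(\Omega)\subset H^{3/2}(\Omega)$ and the closed graph theorem), the trace theorem, Rellich compactness, surjectivity of $M(\lambda)$ onto $\ran\Gamma_1$ via Proposition~\ref{pro:gamma_Weyl}~(iii) with $T|_{\ker\Gamma_1}=-\LaD$, and injectivity via $\ker\Gamma_1\cap\ker(T-\lambda)=\{0\}$; and you give a self-contained proof of (iii)--(iv) by combining the Green identity estimate $\Im(M(\lambda)\varphi,\varphi)=(\Im\lambda)\|\gamma(\lambda)\varphi\|^2$ with a Fredholm perturbation argument ($0\notin\sess(\Theta)$, nearby resolvent points in $\CC^\pm$ forcing index zero, $M(\lambda)$ compact) and then Krein's formula \eqref{krein_gen} plus the maximality criterion. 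This buys a proof readable without \cite{BL} at the price of three points you should make explicit: the notion of $\sess$ for maximal dissipative/accumulative $\Theta$ must be the Fredholm (or even semi-Fredholm, which suffices by your own index argument) one; in (iv) the step ``the range condition in Proposition~\ref{pro:gamma_Weyl}~(v) holds for all $f$, hence $-\Delta_\Theta^\Omega-\lambda$ is surjective'' uses the reading that such $f$ automatically lie in $\ran(A_\Theta-\lambda)$ --- this matches how the paper itself uses \eqref{krein_gen} in the proof of (ii), and can be closed unconditionally by checking directly that $u=(-\LaN-\lambda)^{-1}f+\gamma(\lambda)\bigl(\Theta-M(\lambda)\bigr)^{-1}\gamma(\ov\lambda)^*f$ satisfies $\Gamma_1u=\Theta\Gamma_0u$ and $(T-\lambda)u=f$; and the identification of the Beals boundary value $\Gamma_1$ with the Sobolev trace on $\cD_1(\Omega)$ should be stated when invoking the trace theorem. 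Part (ii) of your proof coincides with the paper's.
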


\begin{proof}
(i)
Without loss of generality let $\lambda\in\CC^+$.  That $M(\lambda)$ is compact
in $L^2(\partial\Omega)$ was proved in \cite[Proposition~4.6]{BL}.
Since
\[
  \Im\big(M(\lambda)x,x\big) = \big(\Im M(\lambda)x,x\big) >0
\]
for every $x\in L^2(\partial\Omega)$, $x\ne0$,
by Proposition~\ref{pro:gamma_Weyl}~(iv), we have $\ker M(\lambda)=\{0\}$.
It follows from the proof of \cite[Proposition~4.6]{BL} that $M(\lambda)$
is closed from $L^2(\partial\Omega)$ onto $H^1(\partial\Omega)$.
Hence its inverse $M(\lambda)^{-1}$ is also closed and by the
closed graph theorem bounded from $H^1(\partial\Omega)$ onto $L^2(\partial\Omega)$.

(ii)
In \eqref{krein_gen} we can choose $\Theta=0$, which yields
\eqref{krein_DN} applied to all $f$ for which $\gamma(\ov\lambda)^*f \in \ran M(\lambda)$.
It follows from \eqref{eq:gamma_bus} that
\[
  \ran\gamma(\ov\lambda)^* \subset \ran\Gamma_1 = H^1(\partial\Omega)
  = \ran M(\lambda),
\]
and hence Krein's formula \eqref{krein_DN} holds on the whole space $L^2(\Omega)$.

(iii) and (iv) were shown in \cite[Theorems~4.8 and 4.10]{BL}.
\end{proof}

\section{Schatten--von Neumann classes and resolvent differences}
\label{sec:main}

Let $\cH$ and $\cK$ be separable Hilbert spaces. We denote by $\cS_{\infty}(\cH,\cK)$ the class of compact operators from
$\cH$ to $\cK$. For $T\in\cS_{\infty}(\cH,\cK)$ the eigenvalues $s_k(T)$ of the non-negative compact operator 
$(T^*T)^\frac12$,
ordered non-increasingly and counted with multiplicites, are called \emph{$s$-numbers} of $T$.

\begin{definition}
Let $\cH$ and $\cK$ be separable Hilbert spaces.
For $p>0$, the Schatten--von Neumann class is defined by
\begin{equation*}
  \cS_p(\cH,\cK) \defeq \bigg\{ T \in \cS_\infty(\cH,\cK) \colon
  \sum_{k=1}^\infty (s_k(T))^p < \infty\bigg\}.
\end{equation*}
If $\cK=\cH$, we write $\cS_p(\cH)$ for $\cS_p(\cH,\cK)$, $0<p\le\infty$.
\end{definition}

The set $\cS_p(\cH,\cK)$ is an ideal for every $p$ with $0<p\le\infty$ and a normed
ideal if $1\le p\le\infty$.
In the following two lemmas we recall some well-known facts about $s$-numbers and Schatten--von~Neumann classes.
For the proofs see, e.g.\ Sections II.\S2.1, II.\S2.2, III.\S7.2 in \cite{GoKr}.

\begin{lemma} \label{lem:snum}
\rule{0ex}{1ex}Let $\cH$ and $\cK$ be separable Hilbert spaces and let $T\in\cS_{\infty}(\cH,\cK)$. Then the following hold:
\begin{myenum}
\item
If $B$, $C$ are bounded operators, then
\[
  s_k(BTC) \le \|B\|\,\|C\|s_k(T) \qquad\text{for all }k\in\NN.
\]
\item
$s_k(T) = s_k(T^*)$ for all $k\in\NN$.
\item
If $s_k(T) = O(k^{-\alpha})$ as $k\to\infty$
for some $\alpha > 0$, then
\[
  T \in \cS_p(\cH,\cK) \quad \text{for all }\, p > \frac1\alpha\,.
\]
\end{myenum}
\end{lemma}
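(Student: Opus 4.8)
The plan is to deduce all three items from the Ky Fan minimax characterisation of the singular numbers, so the first step is to record it: for $T\in\cS_\infty(\cH,\cK)$ and $k\in\NN$,
\[
  s_k(T)=\min\bigl\{\|T-F\|\colon F\ \text{of finite rank},\ \operatorname{rank}F\le k-1\bigr\}.
\]
I would prove this using the Schmidt expansion $T=\sum_{j}s_j(T)(\,\cdot\,,e_j)f_j$ (with orthonormal systems $\{e_j\}\subset\cH$, $\{f_j\}\subset\cK$), which is produced from the spectral theorem applied to the compact non-negative operator $(T^*T)^{1/2}$: the truncation $F=\sum_{j=1}^{k-1}s_j(T)(\,\cdot\,,e_j)f_j$ realises the value $\|T-F\|=s_k(T)$, and for the reverse inequality any $F$ with $\operatorname{rank}F\le k-1$ has a kernel meeting the $k$-dimensional space $\operatorname{span}\{e_1,\dots,e_k\}$ nontrivially, and on a unit vector $x$ of that intersection one gets $\|(T-F)x\|=\|Tx\|\ge s_k(T)$. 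I expect this identity to be the only step requiring real work; the rest is formal. (Of course, in the paper itself one would simply cite Sections II.\S2.1, II.\S2.2, III.\S7.2 of \cite{GoKr} and omit the argument.)

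Granting the minimax formula, item~(i) follows at once: for bounded $B,C$ and any finite-rank $F$ with $\operatorname{rank}F\le k-1$ one has $\operatorname{rank}(BFC)\le k-1$, hence
\[
  s_k(BTC)\le\|BTC-BFC\|=\|B(T-F)C\|\le\|B\|\,\|C\|\,\|T-F\|,
\]
and taking the infimum over admissible $F$ yields $s_k(BTC)\le\|B\|\,\|C\|\,s_k(T)$. For item~(ii) I would note that $F\mapsto F^*$ is a bijection between the finite-rank operators of rank $\le k-1$ from $\cH$ to $\cK$ and those from $\cK$ to $\cH$, and $\|T-F\|=\|T^*-F^*\|$; the two minima defining $s_k(T)$ and $s_k(T^*)$ therefore agree. (Alternatively: $T^*T$ and $TT^*$ have the same nonzero eigenvalues with the same multiplicities, and the singular number sequences are obtained by padding with zeros, so they coincide.)

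Finally, for item~(iii), assume $s_k(T)\le Mk^{-\alpha}$ for all $k\in\NN$ and some constants $M>0$, $\alpha>0$. Then for every $p>0$,
\[
  \sum_{k=1}^\infty\bigl(s_k(T)\bigr)^p\le M^p\sum_{k=1}^\infty k^{-\alpha p},
\]
and the series on the right converges exactly when $\alpha p>1$, i.e.\ when $p>1/\alpha$; hence $T\in\cS_p(\cH,\cK)$ for all such $p$, which is the assertion. The only genuinely nontrivial ingredient is the minimax identity of the first paragraph; since this (together with (i)--(iii)) is standard material, the paper rightly contents itself with the reference to \cite{GoKr}.
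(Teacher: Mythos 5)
Your proposal is correct. The paper gives no proof of this lemma at all --- it simply cites Sections II.\S2.1, II.\S2.2 and III.\S7.2 of \cite{GoKr} --- and your self-contained derivation via the approximation (minimax) characterisation $s_k(T)=\min\{\|T-F\|\colon \operatorname{rank}F\le k-1\}$ is exactly the standard route taken in that reference: the Schmidt-expansion truncation attains the minimum, the dimension count $\dim(\ker F\cap\operatorname{span}\{e_1,\dots,e_k\})\ge 1$ gives the reverse inequality, and then (i) follows since $\operatorname{rank}(BFC)\le k-1$, (ii) from $\|T-F\|=\|T^*-F^*\|$ (or from $T^*T$ and $TT^*$ sharing nonzero spectrum), and (iii) from comparison with $\sum_k k^{-\alpha p}$, convergent precisely when $p>1/\alpha$. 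All steps check out; no gaps.
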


\begin{lemma} \label{lem:sp}
\rule{0ex}{1ex}
Let $\cH_0,\cH_1,\dots,\cH_n$ be separable Hilbert spaces,
let $p,p_1,\dots,p_n > 0$ be such that
\[
  \frac{1}{p} = \frac{1}{p_1} + \dots + \frac{1}{p_n}\,,
\]
and assume that $T_i$ are compact operators in $\cS_{p_i}(\cH_{i-1},\cH_i)$, $i=1,\dots,n$. Then
\[
  T_n\cdots T_1 \in \cS_p(\cH_0,\cH_n).
\]
\end{lemma}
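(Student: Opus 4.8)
The plan is to reduce the assertion of Lemma~\ref{lem:sp} to the case $n=2$ by induction on $n$, and to handle that case by combining the submultiplicativity of the $s$-numbers under multiplication with H\"older's inequality for sequences. As a preliminary step I would recall from \cite{GoKr} the description of the $s$-numbers of a compact operator $T$ as approximation numbers,
\[
  s_k(T) = \inf\bigl\{\,\|T-F\| \colon \operatorname{rank} F \le k-1\,\bigr\}, \qquad k\in\NN,
\]
and from it derive the estimate $s_{j+k-1}(AB)\le s_j(A)\,s_k(B)$ for compact $A,B$ with $AB$ defined: if $F$ and $G$ have ranks at most $j-1$ and $k-1$, then $FB+(A-F)G$ has rank at most $j+k-2$, while $AB - \bigl(FB+(A-F)G\bigr) = (A-F)(B-G)$ has norm at most $\|A-F\|\,\|B-G\|$, so taking infima over such $F,G$ gives the claim. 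Since the approximation numbers are non-increasing, this yields in particular $s_{2k-1}(AB)\le s_k(A)\,s_k(B)$ and $s_{2k}(AB)\le s_k(A)\,s_k(B)$ for all $k\in\NN$.

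For the case $n=2$ I would take compact operators $A\in\cS_{p_1}$, $B\in\cS_{p_2}$ with $AB$ defined and $\frac1p=\frac1{p_1}+\frac1{p_2}$. Then $p<p_1$ and $p<p_2$, so $\frac{p_1}{p}$ and $\frac{p_2}{p}$ are conjugate exponents, as $\frac{p}{p_1}+\frac{p}{p_2}=1$; applying H\"older's inequality to the sequences $\bigl(s_k(A)^p\bigr)_k$ and $\bigl(s_k(B)^p\bigr)_k$ gives
\[
  \sum_{k=1}^\infty\bigl(s_k(A)\,s_k(B)\bigr)^p
  \le \Bigl(\sum_{k=1}^\infty s_k(A)^{p_1}\Bigr)^{p/p_1}\Bigl(\sum_{k=1}^\infty s_k(B)^{p_2}\Bigr)^{p/p_2}<\infty .
\]
Splitting the index of $s_m(AB)$ into odd and even parts and using the two estimates from the previous paragraph, one then obtains
\[
  \sum_{m=1}^\infty s_m(AB)^p
  = \sum_{k=1}^\infty\bigl(s_{2k-1}(AB)^p+s_{2k}(AB)^p\bigr)
  \le 2\sum_{k=1}^\infty\bigl(s_k(A)\,s_k(B)\bigr)^p<\infty ,
\]
so that $AB\in\cS_p$.

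Finally I would run the induction on $n$: for $n=1$ there is nothing to prove, and for $n\ge2$ I would set $S\defeq T_{n-1}\cdots T_1$, define $q>0$ by $\frac1q=\frac1{p_1}+\dots+\frac1{p_{n-1}}$, apply the induction hypothesis to get $S\in\cS_q(\cH_0,\cH_{n-1})$, and then invoke the case $n=2$ for $T_n\in\cS_{p_n}(\cH_{n-1},\cH_n)$ and $S$, using $\frac1p=\frac1{p_n}+\frac1q$, to conclude $T_n\cdots T_1=T_nS\in\cS_p(\cH_0,\cH_n)$. I do not expect any genuine obstacle here: the only non-trivial input is the submultiplicativity estimate $s_{j+k-1}(AB)\le s_j(A)\,s_k(B)$, and the points that need a little care are the verification that the H\"older exponents $\frac{p_1}{p},\frac{p_2}{p}$ are admissible and the odd/even splitting of the summation index.
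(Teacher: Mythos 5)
Your argument is correct in all details: the rank bound for $FB+(A-F)G$, the identity $AB-\bigl(FB+(A-F)G\bigr)=(A-F)(B-G)$, the admissibility of the conjugate H\"older exponents $\tfrac{p_1}{p},\tfrac{p_2}{p}$, the odd/even splitting, and the induction step all go through. Note, however, that the paper offers no proof of Lemma~\ref{lem:sp} at all; it simply cites Sections II.\S2.1, II.\S2.2 and III.\S7.2 of \cite{GoKr}, where the standard proof proceeds exactly along the lines you reconstructed (approximation-number characterization of the $s$-numbers, the Ky Fan type inequality $s_{j+k-1}(AB)\le s_j(A)\,s_k(B)$, and H\"older's inequality), so your write-up is in effect a self-contained version of the reference proof rather than a genuinely different route.
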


The next lemma will be used in the proofs of our main results. 

\begin{lemma} \label{lem:snumbers}
Let $\Omega\subseteq\RR^n$ be a compact domain with $C^\infty$ boundary $\partial\Omega$.
Further, let $B$ be an everywhere defined, bounded operator from $L^2(\Omega)$
to $H^{r_1}(\partial\Omega)$ with $\ran B \subseteq H^{r_2}(\partial\Omega)$
for $r_2 > r_1 \ge 0$.
Then
\[
  B \in \cS_p\big(L^2(\Omega),H^{r_1}(\partial\Omega)\big)
  \quad \text{for all } \,p > \frac{n-1}{r_2-r_1}\,.
\]
\end{lemma}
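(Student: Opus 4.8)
The plan is to reduce the claim to the known eigenvalue asymptotics of the Laplace--Beltrami operator on $\partial\Omega$ via an interpolation/composition argument. First I would fix a small number $\delta>0$ and consider, for a suitable self-adjoint positive operator $\Lambda$ on $L^2(\partial\Omega)$ whose domain scale generates the Sobolev spaces $H^s(\partial\Omega)$ (e.g.\ $\Lambda = (1 - \Delta_{\partial\Omega})^{1/2}$, so that $\dom\Lambda^s = H^s(\partial\Omega)$ with equivalent norms), the factorization
\begin{equation*}
  B = \Lambda^{-r_1}\bigl(\Lambda^{r_1} B\bigr),
\end{equation*}
where I regard $B$ as acting into $L^2(\partial\Omega)$. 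The hypothesis $\ran B \subseteq H^{r_2}(\partial\Omega)$ together with the closed graph theorem shows that $\Lambda^{r_2}B$ is a bounded operator from $L^2(\Omega)$ to $L^2(\partial\Omega)$; hence $\Lambda^{r_1}B = \Lambda^{r_1 - r_2}\bigl(\Lambda^{r_2}B\bigr)$ factors as a bounded operator from $L^2(\Omega)$ to $L^2(\partial\Omega)$ followed by $\Lambda^{r_1-r_2}$, which maps $L^2(\partial\Omega)$ boundedly into $H^{r_2-r_1}(\partial\Omega) = \dom\Lambda^{r_2-r_1}$.

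The key point is then the membership $\Lambda^{-(r_2-r_1)} \in \cS_p\bigl(L^2(\partial\Omega)\bigr)$ for $p > \frac{n-1}{r_2-r_1}$. This follows from the Weyl asymptotics for the Laplace--Beltrami operator on the compact $(n-1)$-dimensional manifold $\partial\Omega$: its eigenvalues $\mu_k$ satisfy $\mu_k \asymp k^{2/(n-1)}$ as $k\to\infty$ (see \cite{Ag}), so the singular numbers of $\Lambda^{-(r_2-r_1)}$ behave like $\mu_k^{-(r_2-r_1)/2} \asymp k^{-(r_2-r_1)/(n-1)}$. By Lemma~\ref{lem:snum}~(iii) with $\alpha = (r_2-r_1)/(n-1)$ we obtain $\Lambda^{-(r_2-r_1)} \in \cS_p(L^2(\partial\Omega))$ precisely for $p > \frac{n-1}{r_2-r_1}$.

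To conclude, I would assemble these pieces: write $B$ (as a map into $H^{r_1}(\partial\Omega)$) as the composition
\begin{equation*}
  B = \Lambda^{-r_1}\circ \Lambda^{r_1-r_2}\circ\bigl(\Lambda^{r_2}B\bigr),
\end{equation*}
where $\Lambda^{r_2}B\colon L^2(\Omega)\to L^2(\partial\Omega)$ is bounded, $\Lambda^{r_1-r_2} = \Lambda^{-(r_2-r_1)}\in\cS_p(L^2(\partial\Omega))$ for $p>\frac{n-1}{r_2-r_1}$, and $\Lambda^{-r_1}\colon L^2(\partial\Omega)\to H^{r_1}(\partial\Omega)$ is bounded (indeed an isomorphism). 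Applying Lemma~\ref{lem:snum}~(i) to the bounded pre- and post-multipliers gives $B\in\cS_p\bigl(L^2(\Omega),H^{r_1}(\partial\Omega)\bigr)$ for all $p>\frac{n-1}{r_2-r_1}$, as claimed.

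The main obstacle I anticipate is purely bookkeeping at the level of spaces rather than anything deep: one must be careful that "$B$ bounded into $H^{r_1}$ with range in $H^{r_2}$" is correctly converted into a statement about operators \emph{between $L^2$ spaces} so that Lemma~\ref{lem:snum} and the Weyl asymptotics apply, and that the intermediate identifications $\dom\Lambda^s \cong H^s(\partial\Omega)$ are used with equivalent (not equal) norms, which is harmless since the Schatten classes and Lemma~\ref{lem:snum}~(i) are insensitive to equivalent norms. A secondary point is justifying the boundedness of $\Lambda^{r_2}B$ via the closed graph theorem, which requires knowing that $B$ maps into the closed subspace $H^{r_2}(\partial\Omega)$ of $H^{r_1}(\partial\Omega)$; this is immediate from the hypothesis.
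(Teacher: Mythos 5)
Your proposal is correct and follows essentially the same route as the paper: both factor $B$ through a power of $(I-\Delta_{\rm LB}^{\partial\Omega})$, deduce boundedness of the smoothed factor via the closed graph theorem, and use the Weyl eigenvalue asymptotics of the Laplace--Beltrami operator together with Lemma~\ref{lem:snum}~(i) and (iii) to obtain the Schatten-class membership for $p>\frac{n-1}{r_2-r_1}$. The only slip is your remark that $H^{r_2}(\partial\Omega)$ is a closed subspace of $H^{r_1}(\partial\Omega)$ (it is not); what the closed-graph argument actually needs is the continuous embedding $H^{r_2}(\partial\Omega)\hookrightarrow H^{r_1}(\partial\Omega)$, which makes $B$ closed, hence bounded, as a map into $H^{r_2}(\partial\Omega)$ — exactly as in the paper's proof.
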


\begin{proof}
As in \cite[Proposition~5.4.1]{Ag} we can define
\[
  \Lambda_{r_1,r_2} \defeq (I - \Delta_{\rm LB}^{\partial\Omega})^\frac{r_2-r_1}2,
\]
where $\Delta_{\rm LB}^{\partial\Omega}$ is the Laplace--Beltrami operator on $\partial\Omega$.
The operator $\Lambda_{r_1,r_2}$ is an isometric isomorphism from $H^{r_2}(\partial\Omega)$
onto $H^{r_1}(\partial\Omega)$.  The asymptotics of the eigenvalues of the Laplace--Beltrami
operator, $\lambda_k(\Delta_{\rm LB}^{\partial\Omega}) \sim Ck^{\frac{2}{n-1}}$
with some constant $C$, imply that
\[
  s_k(\Lambda_{r_1,r_2}^{-1}) = O\bigl(k^{-\frac{r_2 -r_1}{n-1}}\bigr),
  \qquad k\to\infty,
\]
where $\Lambda_{r_1,r_2}^{-1}$ is considered as an operator in $H^{r_1}(\partial\Omega)$.
We can write $B$ in the form
\[
  B = \Lambda_{r_1,r_2}^{-1}(\Lambda_{r_1,r_2} B).
\]
The operator $B$ is closed as an operator from $L^2(\Omega)$ to $H^{r_1}(\partial\Omega)$,
hence also closed as an operator from $L^2(\Omega)$ to $H^{r_2}(\partial\Omega)$,
which implies that it is bounded from $L^2(\Omega)$ to $H^{r_2}(\partial\Omega)$.
Therefore the operator $\Lambda_{r_1,r_2} B$ is bounded from $L^2(\partial\Omega)$ to
$H^{r_1}(\partial\Omega)$, and hence Lemma~\ref{lem:snum}~(i) implies
\[
  s_k(B) \le \|\Lambda_{r_1,r_2}B\|s_k(\Lambda_{r_1,r_2}^{-1})
  = O(k^{-\frac{r_2-r_1}{n-1}}), \qquad k\to\infty,
\]
from which the assertion follows by Lemma~\ref{lem:snum}~(iii).
\end{proof}

The next theorem, our first main result, is about Schatten--von Neumann properties
of differences of resolvents of the Neumann Laplacian and a Laplacian determined
by some boundary operator $\Theta$. For similar results involving the Dirichlet, Neumann and Robin Laplacian 
we refer the reader to  \cite{AlB,Bi,BiS,GMa,Gr-2,Gr-3,Gr-4}
and references therein.

\begin{theorem}
\label{thm:1}
Let $\Omega\subseteq\RR^n$ be a bounded domain with $C^\infty$ boundary $\partial\Omega$
and let $\Theta$ be a self-adjoint (maximal dissipative, maximal accumulative) operator in $L^2(\partial\Omega)$ such that $0\notin\sess(\Theta)$.
Denote by $-\LaN$ the Neumann Laplacian on $\Omega$ and by $-\Delta_\Theta^\Omega$ the generalized Robin Laplacian
from Proposition~{\rm\ref{pro:bl}~(iv)}.  Then 
\begin{equation}
\label{res_diff1}
  (-\Delta_\Theta^\Omega-\lambda)^{-1} - (-\LaN-\lambda)^{-1} \in \cS_p(L^2(\Omega))
  \quad\text{for all } \, p > \frac{n-1}{3}\,
\end{equation}
and all $\lambda\in\rho(-\Delta_\Theta^\Omega)\cap\rho(-\LaN)$.
In particular, for $n=2$ and $n=3$ the resolvent difference is a trace class operator.
\end{theorem}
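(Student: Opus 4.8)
The plan is to use Krein's formula from Proposition~\ref{pro:bl}~(iv), which expresses the resolvent difference as a product
\[
  (-\Delta_\Theta^\Omega-\lambda)^{-1} - (-\LaN-\lambda)^{-1}
  = \gamma(\lambda)\big(\Theta-M(\lambda)\big)^{-1}\gamma(\ov\lambda)^*,
\]
first for $\lambda\in\CC\setminus\RR$ (or the appropriate half-plane), and then extend to all $\lambda$ in the resolvent set by a standard argument. The strategy is to estimate the $s$-numbers of the two outer factors $\gamma(\lambda)$ and $\gamma(\ov\lambda)^*$ by means of Lemma~\ref{lem:snumbers}, treating the middle factor $\big(\Theta-M(\lambda)\big)^{-1}$ simply as a bounded operator (which it is, by Proposition~\ref{pro:bl}~(iii)), so that it does not affect the $s$-number asymptotics by Lemma~\ref{lem:snum}~(i), and then combine the estimates through the multiplication rule of Lemma~\ref{lem:sp}.

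The key step is to identify the right Sobolev mapping properties of $\gamma(\ov\lambda)^*$. By \eqref{eq:gamma_bus} we have $\gamma(\ov\lambda)^* = \Gamma_1(\AN-\lambda)^{-1}$, i.e.\ it takes $f\in L^2(\Omega)$, applies the Neumann resolvent to land in $H^2(\Omega)$ (by elliptic regularity, since the Neumann Laplacian has domain contained in $H^2(\Omega)$), and then takes the Dirichlet trace $\Gamma_1 = \,\cdot\,|_{\partial\Omega}$, which by the trace theorem maps $H^2(\Omega)$ boundedly into $H^{3/2}(\partial\Omega)$. Thus $\gamma(\ov\lambda)^*$ is bounded from $L^2(\Omega)$ into $H^{r_1}(\partial\Omega)$ for any $0\le r_1<3/2$ with range inside $H^{r_2}(\partial\Omega)$ for $r_1<r_2\le 3/2$; applying Lemma~\ref{lem:snumbers} with, say, $r_1=0$ and $r_2$ arbitrarily close to $3/2$ gives $\gamma(\ov\lambda)^*\in\cS_{q}\big(L^2(\Omega),L^2(\partial\Omega)\big)$ for all $q>\frac{n-1}{3/2}=\frac{2(n-1)}{3}$. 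By duality and Lemma~\ref{lem:snum}~(ii), the same $s$-number bound holds for $\gamma(\lambda)=\big(\gamma(\ov\lambda)^*\big)^*$ (one should be slightly careful that $\gamma(\lambda)$ is only densely defined, but it is bounded on $\ran\Gamma_0=L^2(\partial\Omega)$ and hence extends to a bounded operator on all of $L^2(\partial\Omega)$ whose adjoint is $\gamma(\ov\lambda)^*$; I would phrase this using Proposition~\ref{pro:gamma_Weyl}~(i)--(ii)).

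With both outer factors in $\cS_q$ for every $q>\tfrac{2(n-1)}{3}$ and the middle factor bounded, Lemma~\ref{lem:snum}~(i) lets me absorb $\big(\Theta-M(\lambda)\big)^{-1}$ into one of the factors without changing $s$-number asymptotics, and then Lemma~\ref{lem:sp} with $n=2$, $\frac1p=\frac1{p_1}+\frac1{p_2}$, $p_1=p_2=2p$ gives the product in $\cS_p(L^2(\Omega))$ for all $p>\tfrac12\cdot\tfrac{2(n-1)}{3}=\tfrac{n-1}{3}$, which is precisely \eqref{res_diff1}. Finally, to pass from non-real $\lambda$ to arbitrary $\lambda\in\rho(-\Delta_\Theta^\Omega)\cap\rho(-\LaN)$, I would use the resolvent identity: for $\lambda,\mu$ in the common resolvent set, $(-\Delta_\Theta^\Omega-\mu)^{-1}-(-\LaN-\mu)^{-1}$ differs from $(-\Delta_\Theta^\Omega-\lambda)^{-1}-(-\LaN-\lambda)^{-1}$ by terms of the form (bounded)$\cdot\big[(-\Delta_\Theta^\Omega-\lambda)^{-1}-(-\LaN-\lambda)^{-1}\big]\cdot$(bounded), so membership in $\cS_p$ is preserved since $\cS_p$ is an ideal. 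The statement about $n=2,3$ being trace class then follows because $\tfrac{n-1}{3}<1$ for these dimensions, so one may take $p=1$. The main obstacle is the bookkeeping around the domains and ranges of $\gamma(\lambda)$ and $\gamma(\ov\lambda)^*$ — ensuring the trace theorem is applied with the sharp exponent $3/2$ (which is where the improvement over Birman's $\tfrac{n-1}{2}$ comes from: the gain of $3/2$ rather than $1$ derivatives), and handling the fact that $\gamma(\lambda)$ is only densely defined — but this is routine given the properties collected in Proposition~\ref{pro:gamma_Weyl} and the mapping property $\ran\Gamma_1=H^1(\partial\Omega)$ recorded before Proposition~\ref{pro:bl}.
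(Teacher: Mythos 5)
Your argument is correct and is essentially the paper's own proof: Krein's formula from Proposition~\ref{pro:bl}~(iv), the identity $\gamma(\ov\lambda)^*=\Gamma_1(-\LaN-\lambda)^{-1}$ combined with $\dom(\LaN)\subseteq H^2(\Omega)$ and the trace theorem to get range in $H^{3/2}(\partial\Omega)$, Lemma~\ref{lem:snumbers} (where you may simply take $r_1=0$, $r_2=\tfrac32$ rather than $r_2$ arbitrarily close to $\tfrac32$), boundedness of $\bigl(\Theta-M(\lambda)\bigr)^{-1}$ from Proposition~\ref{pro:bl}~(iii), Lemma~\ref{lem:sp}, and the resolvent identity to reach all $\lambda\in\rho(-\Delta_\Theta^\Omega)\cap\rho(-\LaN)$. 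The only slip is the bookkeeping identity $\gamma(\lambda)=\bigl(\gamma(\ov\lambda)^*\bigr)^*$ (and the claim that the bounded closure of $\gamma(\lambda)$ has adjoint $\gamma(\ov\lambda)^*$): the operator $\bigl(\gamma(\ov\lambda)^*\bigr)^*$ is the closure of $\gamma(\ov\lambda)$, not $\gamma(\lambda)$; this is harmless, since one applies the same trace-theorem estimate to $\gamma(\lambda)^*=\Gamma_1(-\LaN-\ov\lambda)^{-1}$ and then uses $\gamma(\lambda)\subseteq\gamma(\lambda)^{**}$ together with Lemma~\ref{lem:snum}~(ii), exactly as in the paper.
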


\begin{proof}
According to Proposition~\ref{pro:bl}~(iv) we have Krein's formula
\begin{equation}
\label{krein_pr}
  (-\Delta_\Theta^\Omega-\lambda)^{-1} - (-\LaN-\lambda)^{-1}
  = \gamma(\lambda)\big(\Theta-M(\lambda)\big)^{-1}\gamma(\ov\lambda)^*
\end{equation}
for $\lambda\in\CC\setminus\RR$ ($\lambda\in\CC^-$, $\lambda\in\CC^+$, respectively).  Equation \eqref{eq:gamma_bus},
the inclusion $\dom(\LaN) \subseteq H^2(\Omega)$ and
the trace theorem (see, e.g.\ \cite{AdF,Wl}) imply that
\[
  \ran\big(\gamma(\ov\lambda)^*\big) \subseteq H^{\frac{3}{2}}(\partial\Omega).
\]
Because the operator $\gamma(\ov\lambda)^*$ is bounded from $L^2(\Omega)$ to
$L^2(\partial\Omega)$ by Proposition~\ref{pro:gamma_Weyl}~(ii), it is closed from
$L^2(\Omega)$ to $H^{\frac{3}{2}}(\partial\Omega)$ and hence bounded by the
closed graph theorem.  Now Lemma~\ref{lem:snumbers} yields
$\gamma(\ov\lambda)^* \in \cS_p(L^2(\Omega),L^2(\partial\Omega))$ for all
$p > \frac{2(n-1)}{3}$\,.

The same is true for $\gamma(\lambda)^*$, and hence the adjoint
$\gamma(\lambda) = \gamma(\lambda)^{**}$ is in
$\cS_p(L^2(\partial\Omega),L^2(\Omega))$ for all $p > \frac{2(n-1)}{3}$\,.
The operator $(\Theta-M(\lambda))^{-1}$ is bounded by Proposition~\ref{pro:bl}~(iii).
Therefore Lemma~\ref{lem:sp} implies that the right-hand side of \eqref{krein_pr}
is in $\cS_p(L^2(\Omega))$ for all $p > \frac{n-1}{3}$ and all $\lambda\in\CC\setminus\RR$ 
($\lambda\in\CC^-$, $\lambda\in\CC^+$, respectively). The fact that \eqref{res_diff1} holds 
for all points in $\rho(-\Delta_\Theta^\Omega)\cap\rho(-\LaN)$ follows from the formula
\begin{equation*}
\begin{split}
&(-\Delta_\Theta^\Omega-\mu)^{-1}-(-\LaN-\mu)^{-1}
=\bigl(I+(\mu-\lambda)(-\LaN-\mu)^{-1}\bigr)\\
&\qquad\times \bigl((-\Delta_\Theta^\Omega-\lambda)^{-1}-(-\LaN-\lambda)^{-1}\bigr)\,
\bigl(I+(\mu-\lambda)(-\Delta_\Theta^\Omega-\mu)^{-1}\bigr)
\end{split}
\end{equation*}
which is true for all $\lambda,\mu\in\rho(-\Delta_\Theta^\Omega)\cap\rho(-\LaN)$.
\end{proof}

Note that the resolvent of the Neumann Laplacian on a bounded domain itself is a compact operator,
so that the same holds true for the resolvent of the generalized Robin Laplacian $-\Delta_\Theta^\Omega$.
In other words, the spectrum of any self-adjoint (maximal dissipative, maximal accumulative) Robin Laplacian
$-\Delta_\Theta^\Omega$ in Theorem~\ref{thm:1} consists only of normal eigenvalues. Therefore, 
the intersections of the resolvent sets $\rho(-\Delta_{\Theta_1}^\Omega)\cap\rho(-\Delta_{\Theta_2}^\Omega)$
of two such Laplacians is always non-empty and
by taking the difference of the expressions in \eqref{res_diff1} 
we obtain the following corollary.

\begin{corollary}
\label{cor:1}
Let $\Theta_1$ and $\Theta_2$ be self-adjoint, maximal dissipative or maximal accumulative operators in $L^2(\partial\Omega)$ such that
$0\notin\sess(\Theta_i)$, $i=1,2$.  Then 
\[
  (-\Delta_{\Theta_1}^\Omega-\lambda)^{-1} - (-\Delta_{\Theta_2}^\Omega-\lambda)^{-1} \in \cS_p(L^2(\Omega))
  \quad\text{for all } \, p > \frac{n-1}{3}\,
\]
and all $\lambda\in\rho(-\Delta_{\Theta_1}^\Omega)\cap\rho(-\Delta_{\Theta_2}^\Omega)$.
\end{corollary}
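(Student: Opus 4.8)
The plan is to reduce Corollary~\ref{cor:1} to Theorem~\ref{thm:1} by inserting the Neumann Laplacian as an intermediate operator. Since $0\notin\sess(\Theta_i)$ for $i=1,2$, Theorem~\ref{thm:1} applies to each $\Theta_i$ and guarantees that for every $\lambda\in\CC\setminus\RR$ (or $\lambda\in\CC^-$, $\lambda\in\CC^+$, according to the type of $\Theta_i$) the resolvent differences $(-\Delta_{\Theta_i}^\Omega-\lambda)^{-1}-(-\LaN-\lambda)^{-1}$ lie in $\cS_p(L^2(\Omega))$ for all $p>\frac{n-1}{3}$. First I would note, as explained in the paragraph following Theorem~\ref{thm:1}, that the resolvent of $-\LaN$ on the bounded domain $\Omega$ is compact, and hence so are the resolvents of $-\Delta_{\Theta_1}^\Omega$ and $-\Delta_{\Theta_2}^\Omega$ (their resolvent differences with $(-\LaN-\lambda)^{-1}$ being compact by Theorem~\ref{thm:1}); therefore each of these operators has purely discrete spectrum, so $\rho(-\Delta_{\Theta_1}^\Omega)\cap\rho(-\Delta_{\Theta_2}^\Omega)$ is the complement of a countable set and in particular is non-empty and contains points of $\CC\setminus\RR$.

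Next I would choose any $\lambda_0\in\rho(-\Delta_{\Theta_1}^\Omega)\cap\rho(-\Delta_{\Theta_2}^\Omega)$ with $\Im\lambda_0\ne 0$ chosen so that Theorem~\ref{thm:1} applies simultaneously to both $\Theta_1$ and $\Theta_2$ (if, say, $\Theta_1$ is maximal dissipative we take $\lambda_0\in\CC^-$, and if in addition $\Theta_2$ is maximal accumulative we instead argue separately for the two half-planes and patch via the resolvent identity below; more simply, one may take $\lambda_0\in\CC\setminus\RR$ when both are self-adjoint, and otherwise pick $\lambda_0$ in whichever half-plane is admissible for the given pair, noting that each relevant half-plane meets the common resolvent set). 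Writing
\begin{equation*}
  (-\Delta_{\Theta_1}^\Omega-\lambda_0)^{-1} - (-\Delta_{\Theta_2}^\Omega-\lambda_0)^{-1}
  = \Big[(-\Delta_{\Theta_1}^\Omega-\lambda_0)^{-1} - (-\LaN-\lambda_0)^{-1}\Big]
  - \Big[(-\Delta_{\Theta_2}^\Omega-\lambda_0)^{-1} - (-\LaN-\lambda_0)^{-1}\Big],
\end{equation*}
both bracketed terms lie in $\cS_p(L^2(\Omega))$ for all $p>\frac{n-1}{3}$ by Theorem~\ref{thm:1}, and since $\cS_p(L^2(\Omega))$ is a (linear) ideal the difference does as well.

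Finally I would promote the statement from the single point $\lambda_0$ to all $\lambda\in\rho(-\Delta_{\Theta_1}^\Omega)\cap\rho(-\Delta_{\Theta_2}^\Omega)$ exactly as at the end of the proof of Theorem~\ref{thm:1}: the algebraic resolvent identity
\begin{equation*}
\begin{split}
&(-\Delta_{\Theta_1}^\Omega-\mu)^{-1}-(-\Delta_{\Theta_2}^\Omega-\mu)^{-1}
=\bigl(I+(\mu-\lambda_0)(-\Delta_{\Theta_2}^\Omega-\mu)^{-1}\bigr)\\
&\qquad\times \bigl((-\Delta_{\Theta_1}^\Omega-\lambda_0)^{-1}-(-\Delta_{\Theta_2}^\Omega-\lambda_0)^{-1}\bigr)\,
\bigl(I+(\mu-\lambda_0)(-\Delta_{\Theta_1}^\Omega-\mu)^{-1}\bigr),
\end{split}
\end{equation*}
valid for all $\lambda_0,\mu\in\rho(-\Delta_{\Theta_1}^\Omega)\cap\rho(-\Delta_{\Theta_2}^\Omega)$, exhibits the resolvent difference at $\mu$ as a product of a bounded operator, a $\cS_p$ operator, and a bounded operator, which by the ideal property of $\cS_p$ again lies in $\cS_p(L^2(\Omega))$. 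I do not expect any genuine obstacle here; the only point requiring a little care is the bookkeeping of half-planes when $\Theta_1$ and $\Theta_2$ are of different (dissipative versus accumulative) type, which is handled by running the argument in the appropriate half-plane for each operator and then using the resolvent identity above — whose validity is purely algebraic and independent of which half-plane $\lambda_0$ lies in — to transport the conclusion everywhere on the common resolvent set.
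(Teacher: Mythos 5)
Your proposal is correct and follows essentially the same route as the paper: the corollary is obtained by inserting $(-\LaN-\lambda)^{-1}$ as an intermediate term, applying Theorem~\ref{thm:1} to each $\Theta_i$, using compactness of the resolvents to see that the common resolvent set is non-empty, and transporting the conclusion to all $\lambda\in\rho(-\Delta_{\Theta_1}^\Omega)\cap\rho(-\Delta_{\Theta_2}^\Omega)$ via the resolvent identity. Your extra bookkeeping about half-planes is harmless but not needed, since the statement of Theorem~\ref{thm:1} already covers every $\lambda\in\rho(-\Delta_{\Theta_i}^\Omega)\cap\rho(-\LaN)$, and any non-real $\lambda_0$ avoiding the (countable) spectra of $-\Delta_{\Theta_1}^\Omega$ and $-\Delta_{\Theta_2}^\Omega$ lies in $\rho(-\LaN)$ automatically.
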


\medskip

\begin{remark}
Proposition~\ref{pro:bl}~(iii), (iv) and hence Theorem~\ref{thm:1} are still valid if
$\Theta$ is a self-adjoint (maximal dissipative, maximal accumulative) linear relation (i.e.\ a multi-valued operator)
in $L^2(\partial\Omega)$ such that $0\notin\sess(\Theta)$; see \cite[Section~4]{BL}.
In particular, if $0\in\rho(\Theta)$, then $\Theta^{-1}$ is a bounded, self-adjoint (maximal dissipative, maximal accumulative, respectively) operator.
Conversely, for every bounded, self-adjoint (maximal dissipative, maximal accumulative) operator $B$,
 the inverse $B^{-1}$ is a
self-adjoint (maximal dissipative, maximal accumulative, respectively) relation with $0\in\rho(B^{-1})$.  Hence the restriction $-\Delta_{B^{-1}}^\Omega$
of $T$ to the domain
\[
  \dom(-\Delta_{B^{-1}}^\Omega)=\bigg\{f\in\cD_1(\Omega)\colon \frac{\partial f}{\partial n}\Big|_{\partial\Omega}
  = Bf|_{\partial\Omega}\bigg\}
\]
is a self-adjoint (maximal dissipative, maximal accumulative, respectively) realization of the Laplacian and satisfies
\[
  (-\Delta_{B^{-1}}^\Omega-\lambda)^{-1} - (-\LaN-\lambda)^{-1} \in \cS_p(L^2(\Omega))
  \quad\text{for all } p > \frac{n-1}{3}\,.
\]
As a special case we can treat (ordinary) Robin boundary conditions
\[
  \frac{\partial f}{\partial n}\Big|_{\partial\Omega} = \beta f|_{\partial\Omega},
\]
where the values of $\beta\in L^\infty(\partial\Omega)$ are real (have positive/negative imaginary parts, respectively).
\hfill$\bullet$
\end{remark}

Theorem~\ref{thm:1} does not cover the case of the difference of Dirichlet and
Neumann Laplacians since for the Dirichlet Laplacian we have to choose $\Theta=0$,
which does not satisfy $0\notin\sess(\Theta)$.
However, we obtain the following result, which is due to Birman \cite{Bi}.

\begin{theorem} \label{thm:2}
Let $\Omega\subseteq\RR^n$ be a bounded domain with $C^\infty$ boundary $\partial\Omega$.
Then
\begin{equation}
\label{res_diff2}
  (-\LaD-\lambda)^{-1} - (-\LaN-\lambda)^{-1} \in \cS_p(L^2(\Omega))
  \quad\text{for all }\, p > \frac{n-1}{2}\,
\end{equation}
and all $\lambda\in\rho(-\LaD)\cap\rho(-\LaN)$.
In particular, for $n=2$ the resolvent difference is a trace class operator.
\end{theorem}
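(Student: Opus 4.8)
The plan is to mimic the proof of Theorem~\ref{thm:1}, but exploit the better boundary regularity available for the Dirichlet Laplacian. Recall from Proposition~\ref{pro:bl}~(ii) that Krein's formula in this situation reads
\[
  (-\LaD-\lambda)^{-1} - (-\LaN-\lambda)^{-1}
  = -\gamma(\lambda)M(\lambda)^{-1}\gamma(\ov\lambda)^*
\]
for $\lambda\in\CC\setminus\RR$. The three factors on the right-hand side are: $\gamma(\ov\lambda)^*$, the bounded operator $M(\lambda)^{-1}$, and $\gamma(\lambda)$. The strategy is to locate $\gamma(\ov\lambda)^*$ (and by taking adjoints, $\gamma(\lambda)$) in a Schatten--von Neumann class as sharply as possible and then combine the factors via Lemma~\ref{lem:sp}, just as in Theorem~\ref{thm:1}. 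Since $M(\lambda)^{-1}$ maps $H^1(\partial\Omega)$ boundedly onto $L^2(\partial\Omega)$ by Proposition~\ref{pro:bl}~(i), it does not degrade the Schatten class, so the gain over Theorem~\ref{thm:1} must come entirely from the mapping properties of $\gamma(\ov\lambda)^*$.

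First I would record, exactly as in the proof of Theorem~\ref{thm:1}, that by \eqref{eq:gamma_bus} we have $\gamma(\ov\lambda)^* = \Gamma_1(-\LaN-\lambda)^{-1}$; since $\dom(\LaN)\subseteq H^2(\Omega)$ and $\Gamma_1 f = f|_{\partial\Omega}$, the trace theorem gives $\ran\gamma(\ov\lambda)^*\subseteq H^{3/2}(\partial\Omega)$, and by the closed graph theorem $\gamma(\ov\lambda)^*$ is bounded from $L^2(\Omega)$ into $H^{3/2}(\partial\Omega)$. Now comes the one genuinely new observation: for the Dirichlet--Neumann difference the relevant factor on the right edge is $M(\lambda)^{-1}\gamma(\ov\lambda)^*$, and one can absorb one more derivative. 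Concretely, applying Lemma~\ref{lem:snumbers} with $r_1=0$, $r_2=\tfrac32$ yields $\gamma(\ov\lambda)^*\in\cS_p(L^2(\Omega),L^2(\partial\Omega))$ for $p>\tfrac{2(n-1)}{3}$; but applying it instead with $r_1=1$, $r_2=\tfrac32$ (legitimate because $\gamma(\ov\lambda)^*$ maps into $H^{3/2}\subseteq H^1$ and is therefore bounded into $H^1(\partial\Omega)$) shows that $\gamma(\ov\lambda)^*$, regarded as an operator into $H^1(\partial\Omega)$, lies in $\cS_q(L^2(\Omega),H^1(\partial\Omega))$ for every $q>\tfrac{n-1}{1/2}=2(n-1)$. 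Composing with the bounded operator $M(\lambda)^{-1}\colon H^1(\partial\Omega)\to L^2(\partial\Omega)$ and using Lemma~\ref{lem:snum}~(i) keeps $M(\lambda)^{-1}\gamma(\ov\lambda)^*\in\cS_q(L^2(\Omega),L^2(\partial\Omega))$ for $q>2(n-1)$, while $\gamma(\lambda)\in\cS_q(L^2(\partial\Omega),L^2(\Omega))$ for the same range of $q$ (by passing to adjoints as in Theorem~\ref{thm:1}). Then Lemma~\ref{lem:sp} with $\tfrac1p=\tfrac1q+\tfrac1q$ gives the product in $\cS_p(L^2(\Omega))$ for $p>n-1$; splitting the regularity more cleverly between the two $\gamma$-factors — put the full $H^{3/2}$ gain on one factor ($q_1>\tfrac{2(n-1)}{3}$) and the $H^1$-into-$L^2$ description on the other ($q_2>2(n-1)$) after moving $M(\lambda)^{-1}$ to that side — yields $\tfrac1p>\tfrac{3}{2(n-1)}+\tfrac{1}{2(n-1)}=\tfrac{2}{n-1}$, i.e.\ the bound $p>\tfrac{n-1}{2}$ claimed in \eqref{res_diff2}. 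Finally, to pass from $\lambda\in\CC\setminus\RR$ to all $\lambda\in\rho(-\LaD)\cap\rho(-\LaN)$ I would invoke verbatim the resolvent-identity argument used at the end of the proof of Theorem~\ref{thm:1}.

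The main obstacle is the bookkeeping in the last step: one has to distribute the available Sobolev smoothness ($H^{3/2}$ on one side, and the $M(\lambda)^{-1}\colon H^1\to L^2$ absorption) asymmetrically between the two copies of $\gamma$ so that the exponents in Lemma~\ref{lem:sp} add up to $\tfrac{2}{n-1}$ rather than the naive $\tfrac{2\cdot 3}{2(n-1)}$ one gets by treating both factors symmetrically with the crude $H^{3/2}\hookrightarrow L^2$ estimate. Everything else — the trace theorem, the closed-graph boundedness, the eigenvalue asymptotics of $\Delta_{\rm LB}^{\partial\Omega}$ hidden inside Lemma~\ref{lem:snumbers}, and the resolvent-identity continuation in $\lambda$ — is already packaged in the lemmas and propositions above and can be cited directly.
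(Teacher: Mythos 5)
Your proposal is correct and follows essentially the same route as the paper: Krein's formula from Proposition~\ref{pro:bl}~(ii), the bound $\gamma(\ov\lambda)^*\colon L^2(\Omega)\to H^{3/2}(\partial\Omega)$, Lemma~\ref{lem:snumbers} with $r_1=1$, $r_2=\tfrac32$ to place $\gamma(\ov\lambda)^*$ in $\cS_q(L^2(\Omega),H^1(\partial\Omega))$ for $q>2(n-1)$, absorption of the bounded $M(\lambda)^{-1}\colon H^1(\partial\Omega)\to L^2(\partial\Omega)$, the class $\cS_{q_1}$, $q_1>\tfrac{2(n-1)}{3}$, for $\gamma(\lambda)$ via adjoints, and Lemma~\ref{lem:sp} giving $\tfrac{3}{2(n-1)}+\tfrac{1}{2(n-1)}=\tfrac{2}{n-1}$, followed by the resolvent identity in $\lambda$. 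The only difference is your intermediate symmetric estimate ($p>n-1$), which the paper skips by distributing the regularity asymmetrically from the start.
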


\begin{proof}
By Proposition~\ref{pro:bl}~(ii) we have
\[
  (-\LaD-\lambda)^{-1} - (-\LaN-\lambda)^{-1}
  = -\gamma(\lambda)M(\lambda)^{-1}\gamma(\ov{\lambda})^*
\]
for $\lambda\in\CC\setminus\RR$.  The operator $\gamma(\ov\lambda)^*$ is bounded as an
operator from $L^2(\Omega)$ to $H^{\frac{3}{2}}(\partial\Omega)$; see the
proof of Theorem~\ref{thm:1}.  As an operator from $L^2(\Omega)$ to $H^1(\partial\Omega)$
it is in $\cS_p(L^2(\Omega),H^1(\partial\Omega))$ for all $p > 2(n-1)$ according
to Lemma~\ref{lem:snumbers}.

By Proposition~\ref{pro:bl}~(i), the operator $M(\lambda)^{-1}$ is bounded from
$H^1(\partial\Omega)$ to $L^2(\partial\Omega)$ and therefore $M(\lambda)^{-1}\gamma(\ov{\lambda})^*\in\cS_p(L^2(\Omega),L^2(\partial\Omega))$ for all $p > 2(n-1)$.  As in the proof of Theorem~\ref{thm:1}
we have $\gamma(\lambda) \in \cS_p(L^2(\partial\Omega),L^2(\Omega))$ for
all $p > \frac{2(n-1)}{3}$\,.  Hence Lemma~\ref{lem:sp} implies that the
resolvent difference in \eqref{res_diff2} is in $\cS_p(L^2(\Omega))$ for all
\[
  p > \frac{1}{\frac{1}{2(n-1)} + \frac{3}{2(n-1)}} = \frac{n-1}{2}\,.
\]
The same argument as in the proof of Theorem~\ref{thm:1} shows that \eqref{res_diff2} holds also for all $\lambda\in\rho(-\LaD)\cap\rho(-\LaN)$.
\end{proof}

\begin{remark}
Comparing the result of Theorem \ref{thm:1} with the result of Theorem~\ref{thm:2}
we see that we have $\frac{n-1}{3}$ instead of $\frac{n-1}{2}$.
The explanation comes from the fact that $M(\lambda)$ is compact in $L^2(\partial\Omega)$,
and hence $M(\lambda)^{-1}$ is unbounded in $L^2(\partial\Omega)$
whereas for $\Theta$ as in Theorem \ref{thm:1} the operator
$(\Theta - M(\lambda))^{-1}$ is bounded in $L^2(\partial\Omega)$ .
\end{remark}

Combining Theorems~\ref{thm:1} and \ref{thm:2} we obtain the following corollary.

\begin{corollary}
Let $\Omega\subseteq\RR^n$ be a bounded domain with $C^\infty$ boundary $\partial\Omega$
and let $\Theta$ be a self-adjoint (maximal dissipative, maximal accumulative) operator in $L^2(\partial\Omega)$ such that $0\notin\sess(\Theta)$.
Denote by $-\Delta_\Theta^\Omega$ the generalized Robin Laplacian from Proposition~{\rm\ref{pro:bl}~(iv)}.
Then 
\begin{equation*}
  (-\Delta_\Theta^\Omega-\lambda)^{-1} - (-\LaD-\lambda)^{-1} \in \cS_p(L^2(\Omega))
  \quad\text{for all } \, p > \frac{n-1}{2}\,
\end{equation*}
and all $\lambda\in\rho(-\Delta_\Theta^\Omega)\cap\rho(-\LaD)$.
\end{corollary}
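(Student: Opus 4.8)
The plan is to combine Theorem~\ref{thm:1} and Theorem~\ref{thm:2} via a telescoping identity, writing the desired resolvent difference as the sum of two differences whose Schatten--von Neumann properties are already known. Concretely, for a fixed $\lambda\in\rho(-\Delta_\Theta^\Omega)\cap\rho(-\LaD)$ I would first observe that such a $\lambda$ also lies in $\rho(-\LaN)$: indeed, as remarked after Theorem~\ref{thm:1}, all three resolvents are compact, so each operator has purely discrete spectrum, and one can always pick a common point in all three resolvent sets (or, more simply, use the argument at the end of the proof of Theorem~\ref{thm:1} to transfer the $\cS_p$-membership from a convenient $\lambda_0\in\CC\setminus\RR$ to an arbitrary point of $\rho(-\Delta_\Theta^\Omega)\cap\rho(-\LaD)$). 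Then I would write
\begin{equation*}
  (-\Delta_\Theta^\Omega-\lambda)^{-1} - (-\LaD-\lambda)^{-1}
  = \Bigl[(-\Delta_\Theta^\Omega-\lambda)^{-1} - (-\LaN-\lambda)^{-1}\Bigr]
  + \Bigl[(-\LaN-\lambda)^{-1} - (-\LaD-\lambda)^{-1}\Bigr].
\end{equation*}

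Next, I would estimate the two bracketed terms separately. By Theorem~\ref{thm:1} the first bracket belongs to $\cS_p(L^2(\Omega))$ for all $p>\frac{n-1}{3}$, and by Theorem~\ref{thm:2} the second bracket belongs to $\cS_p(L^2(\Omega))$ for all $p>\frac{n-1}{2}$. Since $\cS_p(L^2(\Omega))$ is a (two-sided) ideal, in particular a linear subspace, and since $\cS_p\subseteq\cS_q$ whenever $p\le q$, the sum of the two terms lies in $\cS_p(L^2(\Omega))$ for every $p$ exceeding the larger of the two thresholds, i.e.\ for all $p>\frac{n-1}{2}$. This immediately gives the claimed statement.

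There is essentially no hard step here: the only point requiring a sentence of care is the claim that $\rho(-\Delta_\Theta^\Omega)\cap\rho(-\LaD)$ is nonempty and that the $\cS_p$-membership, once established at one spectral point, propagates to all of $\rho(-\Delta_\Theta^\Omega)\cap\rho(-\LaD)$. Both follow from the compactness of the resolvents together with the resolvent identity, exactly as in the last display of the proof of Theorem~\ref{thm:1}: writing $R_\Theta(\mu)-R_D(\mu)$ as a product of the bounded factors $I+(\mu-\lambda)R_N(\mu)$ (and the analogue with $-\Delta_\Theta^\Omega$) with the resolvent difference at $\lambda$ shows that membership at one point forces membership at every point of the common resolvent set, since ideals are stable under multiplication by bounded operators (Lemma~\ref{lem:snum}~(i)). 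Thus the corollary is a direct consequence of the two preceding theorems, and I would present the proof in just a few lines.
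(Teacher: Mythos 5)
Your proof is correct and follows exactly the route the paper intends: the corollary is stated there as a direct combination of Theorems~\ref{thm:1} and \ref{thm:2}, i.e.\ inserting the Neumann resolvent, using that the Schatten--von Neumann classes are nested linear subspaces, and transferring the membership to all of $\rho(-\Delta_\Theta^\Omega)\cap\rho(-\LaD)$ by the resolvent-identity argument from the end of the proof of Theorem~\ref{thm:1}. Your treatment of the spectral-point issue is the only detail the paper leaves implicit, and you handle it correctly.
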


\medskip

For ordinary boundary triples the resolvent difference belongs to the
same Schatten--von Neumann class as the resolvent difference of the
operators which parameterize the extensions; see \cite[Theorem 2 and Corollary 4]{DMa-1}.
In the case of quasi boundary triples the situation is different.
In the next Theorem we assume that $\Theta_2 - \Theta_1 \in \cS_{p_0}(L^2(\partial\Omega))$
for some $p_0>0$ and investigate Schatten--von Neumann properties of the resolvent
difference of the generalized Robin Laplacians parameterized by $\Theta_1$ and $\Theta_2$.

\begin{theorem}
\label{thm:3}
Let $\Omega\subseteq\RR^n$ be a bounded domain with $C^\infty$ boundary $\partial\Omega$.
Further, let $\Theta_1$ and $\Theta_2$ be bounded self-adjoint, maximal dissipative or maximal accumulative 
operators in
$L^2(\partial\Omega)$ such that $0\notin\sess(\Theta_i)$, $i=1,2$, and
\[
  \Theta_1-\Theta_2 \in \cS_{p_0}(L^2(\partial\Omega))
\]
for some $p_0\in(0,\infty)$. Denote by $-\Delta_{\Theta_i}^\Omega$ the restriction of $T$ as
in Proposition~{\rm\ref{pro:bl}~(iv)}.  Then
\begin{equation}
\label{res_diff3}
\begin{aligned}
  (-\Delta_{\Theta_1}^\Omega-\lambda)^{-1} - (-\Delta_{\Theta_2}^\Omega-\lambda)^{-1}
  \in \cS_p(L^2(\Omega))& \\
  \quad\text{for all } &\,p > \frac{(n-1)p_0}{n-1+3p_0}\,
\end{aligned}
\end{equation}
and all $\lambda\in\rho(-\Delta_{\Theta_1}^\Omega)\cap\rho(-\Delta_{\Theta_2}^\Omega)$.
\end{theorem}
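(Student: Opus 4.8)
The plan is to start from the two instances of Krein's formula given by Proposition~\ref{pro:bl}~(iv) relative to the common reference operator $-\LaN$ and subtract them. Writing $R_i(\lambda)\defeq\big(\Theta_i-M(\lambda)\big)^{-1}$ for $i=1,2$, the resolvent difference becomes
\[
  (-\Delta_{\Theta_1}^\Omega-\lambda)^{-1}-(-\Delta_{\Theta_2}^\Omega-\lambda)^{-1}
  =\gamma(\lambda)\big(R_1(\lambda)-R_2(\lambda)\big)\gamma(\ov\lambda)^*.
\]
The standard resolvent identity for the bounded invertible operators $\Theta_i-M(\lambda)$ gives
\[
  R_1(\lambda)-R_2(\lambda)
  =R_1(\lambda)\big(\Theta_2-\Theta_1\big)R_2(\lambda),
\]
so that the resolvent difference factorizes as
\[
  \gamma(\lambda)R_1(\lambda)\,\big(\Theta_2-\Theta_1\big)\,R_2(\lambda)\gamma(\ov\lambda)^*.
\]
Here $R_1(\lambda)$ and $R_2(\lambda)$ are bounded by Proposition~\ref{pro:bl}~(iii), and $\Theta_2-\Theta_1\in\cS_{p_0}(L^2(\partial\Omega))$ by hypothesis, so the middle three factors $R_1(\lambda)(\Theta_2-\Theta_1)R_2(\lambda)$ lie in $\cS_{p_0}(L^2(\partial\Omega))$ by the ideal property (Lemma~\ref{lem:snum}~(i)).

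Next I would bring in the mapping properties of the $\gamma$-field exactly as in the proof of Theorem~\ref{thm:1}: from \eqref{eq:gamma_bus}, the inclusion $\dom(\LaN)\subseteq H^2(\Omega)$ and the trace theorem one gets $\ran\big(\gamma(\ov\lambda)^*\big)\subseteq H^{3/2}(\partial\Omega)$, whence by the closed graph theorem $\gamma(\ov\lambda)^*$ is bounded from $L^2(\Omega)$ to $H^{3/2}(\partial\Omega)$, and Lemma~\ref{lem:snumbers} (with $r_1=0$, $r_2=3/2$, and $n-1$ the dimension of $\partial\Omega$) yields $\gamma(\ov\lambda)^*\in\cS_q(L^2(\Omega),L^2(\partial\Omega))$ for all $q>\frac{2(n-1)}3$. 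The same holds for $\gamma(\lambda)^*$, and taking adjoints (Lemma~\ref{lem:snum}~(ii)) gives $\gamma(\lambda)\in\cS_q(L^2(\partial\Omega),L^2(\Omega))$ for all $q>\frac{2(n-1)}3$.

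Now I would assemble the product $\gamma(\lambda)\cdot\big[R_1(\lambda)(\Theta_2-\Theta_1)R_2(\lambda)\big]\cdot\gamma(\ov\lambda)^*$ via Lemma~\ref{lem:sp} with three factors: the outer two in $\cS_q$ for all $q>\frac{2(n-1)}3$ and the middle one in $\cS_{p_0}$. The resulting product lies in $\cS_p(L^2(\Omega))$ whenever $\frac1p>\frac1{p_0}+\frac2q$ for some admissible $q$, i.e.\ whenever $\frac1p>\frac1{p_0}+\frac{2}{2(n-1)/3}=\frac1{p_0}+\frac3{n-1}=\frac{n-1+3p_0}{(n-1)p_0}$, which is precisely $p>\frac{(n-1)p_0}{n-1+3p_0}$ as claimed in \eqref{res_diff3}. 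Finally, the extension from $\lambda\in\CC\setminus\RR$ (or the appropriate half-plane) to all $\lambda\in\rho(-\Delta_{\Theta_1}^\Omega)\cap\rho(-\Delta_{\Theta_2}^\Omega)$ follows from the same interpolation-of-resolvents identity used at the end of the proof of Theorem~\ref{thm:1}, since $\cS_p$ is an ideal.

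I do not expect a genuine obstacle here: the only point requiring a little care is the resolvent identity $R_1-R_2=R_1(\Theta_2-\Theta_1)R_2$, which is valid precisely because $\Theta_1$ and $\Theta_2$ are \emph{bounded} (so that $R_i(\lambda)(\Theta_1-\Theta_2)R_i(\lambda)$ makes sense as a product of everywhere-defined bounded operators and the domains cause no trouble) — this is why the theorem's hypotheses include boundedness of $\Theta_1,\Theta_2$. Everything else is a direct combination of Lemmas~\ref{lem:snum}, \ref{lem:sp} and \ref{lem:snumbers} with the mapping properties already recorded in Propositions~\ref{pro:gamma_Weyl} and \ref{pro:bl}, exactly parallel to the proof of Theorem~\ref{thm:1}.
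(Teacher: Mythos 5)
Your argument is exactly the paper's argument for the ``compatible'' cases, but it has a genuine gap: the case where $\Theta_1$ is maximal dissipative and $\Theta_2$ is maximal accumulative (or vice versa). Your factorization requires subtracting the two Krein formulas of Proposition~\ref{pro:bl}~(iv) at a \emph{common} non-real $\lambda$, and it needs both $\big(\Theta_1-M(\lambda)\big)^{-1}$ and $\big(\Theta_2-M(\lambda)\big)^{-1}$ to be bounded and everywhere defined at that same $\lambda$ (Proposition~\ref{pro:bl}~(iii)). But these facts are guaranteed only for $\lambda\in\CC^-$ when $\Theta_i$ is maximal dissipative and only for $\lambda\in\CC^+$ when $\Theta_i$ is maximal accumulative; in the mixed case there is no half-plane on which both formulas are available, so the phrase ``for $\lambda\in\CC\setminus\RR$ (or the appropriate half-plane)'' covers nothing, and the identity $R_1(\lambda)-R_2(\lambda)=R_1(\lambda)(\Theta_2-\Theta_1)R_2(\lambda)$ cannot even be written down from the results recorded in the paper. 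This is not a formality: the paper devotes the entire second half of its proof to precisely this case.

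The paper's resolution is to insert the self-adjoint intermediaries $\Re\Theta_1$ and $\Re\Theta_2$. From $\Theta_1-\Theta_2\in\cS_{p_0}$ one gets $\Im(\Theta_1-\Theta_2)\in\cS_{p_0}$, and the inequalities $0\le\Im\Theta_1\le\Im(\Theta_1-\Theta_2)$ and $0\le-\Im\Theta_2\le\Im(\Theta_1-\Theta_2)$ give $\Im\Theta_i\in\cS_{p_0}$; hence $\Theta_i-\Re\Theta_i\in\cS_{p_0}$ and $\sess(\Theta_i)=\sess(\Re\Theta_i)$, so each of the three differences
\begin{equation*}
(-\Delta_{\Theta_1}^\Omega-\lambda)^{-1}-(-\Delta_{\Re\Theta_1}^\Omega-\lambda)^{-1},\quad
(-\Delta_{\Re\Theta_1}^\Omega-\lambda)^{-1}-(-\Delta_{\Re\Theta_2}^\Omega-\lambda)^{-1},\quad
(-\Delta_{\Re\Theta_2}^\Omega-\lambda)^{-1}-(-\Delta_{\Theta_2}^\Omega-\lambda)^{-1}
\end{equation*}
falls under the already-proved compatible cases and lies in $\cS_p$ for $p>\frac{(n-1)p_0}{n-1+3p_0}$; since all resolvents involved are compact, almost every $\lambda$ lies in all the resolvent sets, and summing (together with the transfer identity you already cite) yields \eqref{res_diff3}. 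To complete your proof you must add this reduction, or otherwise justify the validity of both Krein formulas at a common $\lambda$ in the mixed case; your treatment of the remaining cases and the $s$-number bookkeeping via Lemmas~\ref{lem:snum}, \ref{lem:sp} and \ref{lem:snumbers} matches the paper and is correct.
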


By Theorem~\ref{thm:1} and Corollary~\ref{cor:1} the difference of the resolvents of $-\Delta_{\Theta_1}^\Omega$ and $-\Delta_{\Theta_2}^\Omega$
is a trace class operator for $n=2$ and $n=3$ without any further assumptions on $\Theta_1-\Theta_2$.
If, in addition, $\Theta_1-\Theta_2 \in \cS_{p_0}(L^2(\partial\Omega))$ for some $p_0\in(0,\infty)$, then 
this also holds for $n=4$.

\begin{corollary}
Let the assumptions be as in Theorem~\ref{thm:3}. For $n\in\{2,3,4\}$ and all $p_0\in(0,\infty)$ the resolvent difference 
in \eqref{res_diff3} is a trace class operator. The same holds for $n>4$ and 
$p_0<\frac{n-1}{n-4}$.
\end{corollary}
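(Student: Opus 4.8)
The plan is to apply Theorem~\ref{thm:3} essentially verbatim and to reduce everything to an elementary inequality. By that theorem, under the assumptions in force the resolvent difference in \eqref{res_diff3} belongs to $\cS_p(L^2(\Omega))$ for every $p > q(n,p_0)$, where I abbreviate $q(n,p_0) \defeq \frac{(n-1)p_0}{n-1+3p_0}$. Hence, to conclude that the resolvent difference is trace class it suffices to show that $q(n,p_0) < 1$: then $p = 1$ is an admissible exponent in \eqref{res_diff3}, so the difference lies in $\cS_1(L^2(\Omega))$.

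The next step is to verify the inequality $q(n,p_0) < 1$. Since $n - 1 + 3p_0 > 0$, it is equivalent to $(n-1)p_0 < n - 1 + 3p_0$, i.e.\ to $(n-4)p_0 < n-1$. If $n \in \{2,3,4\}$, so that $n - 4 \le 0$, then the left-hand side is non-positive while $n - 1 \ge 1 > 0$, and the inequality holds for every $p_0 \in (0,\infty)$. If $n > 4$, then $n - 4 > 0$ and the inequality becomes $p_0 < \frac{n-1}{n-4}$, which is precisely the hypothesis. Thus in both cases $q(n,p_0) < 1$, and the corollary follows from Theorem~\ref{thm:3}.

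Since the argument is purely arithmetical there is no genuine obstacle; the only point that must be respected is that Theorem~\ref{thm:3} yields membership in $\cS_p$ only for $p$ \emph{strictly} larger than $q(n,p_0)$, so one really needs the strict inequality $q(n,p_0) < 1$ rather than $q(n,p_0) \le 1$. This is why the case $n = 4$ is unconditional — there $q(4,p_0) = \frac{p_0}{1+p_0}$, which is $< 1$ for every finite $p_0$ and approaches $1$ only as $p_0 \to \infty$ — whereas for $n > 4$ one must impose the sharp bound $p_0 < \frac{n-1}{n-4}$ in order to keep $q(n,p_0)$ below $1$.
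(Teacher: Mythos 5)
Your argument is correct and is exactly the intended one: the corollary is an immediate arithmetic consequence of Theorem~\ref{thm:3}, since $\frac{(n-1)p_0}{n-1+3p_0}<1$ is equivalent to $(n-4)p_0<n-1$, which holds for all $p_0\in(0,\infty)$ when $n\le 4$ and precisely under $p_0<\frac{n-1}{n-4}$ when $n>4$, so $p=1$ is an admissible exponent. Your observation that the strict inequality is what matters (because the theorem only gives membership for $p$ strictly above the threshold) is also the right point to flag.
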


\begin{proof}[Proof of Theorem~\ref{thm:3}]
Assume first that $\Theta_2$ is self-adjoint and that $\Theta_1$ is self-adjoint (maximal dissipative or maximal accumulative,
respectively).
According to Proposition~\ref{pro:bl}~(iv) we can write
\begin{align*}
  &(-\Delta_{\Theta_1}^\Omega-\lambda)^{-1} - (-\Delta_{\Theta_2}^\Omega-\lambda)^{-1} \\[1ex]
  &\quad= \gamma(\lambda)\Big[\big(\Theta_1-M(\lambda)\big)^{-1} - \big(\Theta_2-M(\lambda)\big)^{-1}\Big]
  \gamma(\ov\lambda)^* \\[1ex]
  &\quad= \gamma(\lambda)\big(\Theta_1-M(\lambda)\big)^{-1}(\Theta_2-\Theta_1)
  \big(\Theta_2-M(\lambda)\big)^{-1}\gamma(\ov\lambda)^*
\end{align*}
for $\lambda\in\CC\setminus\RR$ ($\lambda\in\CC^-$, $\lambda\in\CC^+$, respectively).
As in the proof of Theorem~\ref{thm:1} we have
\begin{multline*}
  \gamma(\lambda) \in \cS_p\big(L^2(\partial\Omega),L^2(\Omega)\big),\quad
  \gamma(\ov\lambda)^* \in \cS_p\big(L^2(\Omega),L^2(\partial\Omega)\big) \\[1ex]
  \text{for all } p > \frac{2(n-1)}{3}\,.
\end{multline*}
The operators $(\Theta_i-M(\lambda))^{-1}$ are bounded by Proposition~\ref{pro:bl}~(iii).
Hence, using Lemma~\ref{lem:sp} we obtain that the resolvent difference
in \eqref{res_diff3} is in $\cS_p(L^2(\Omega))$ for all
\[
  p > \frac{1}{\frac{3}{2(n-1)} + \frac{1}{p_0} + \frac{3}{2(n-1)}}
  = \frac{(n-1)p_0}{n-1+3p_0}\,.
\]
The same argument as in the proof of Theorem~\ref{thm:1} shows that \eqref{res_diff2} holds also for all
$\lambda\in\rho(-\Delta_{\Theta_1}^\Omega)\cap\rho(-\Delta_{\Theta_2}^\Omega)$.

In the case that $\Theta_1$ and $\Theta_2$ are both either maximal dissipative or
maximal accumulative the above arguments remain valid for $\lambda\in\CC^-$ or $\lambda\in\CC^+$, respectively,
and hence \eqref{res_diff3} holds also in this case.

Let us now consider the case that $\Theta_1$ is maximal dissipative and $\Theta_2$ is maximal accumulative.
If $\Theta_1$ is maximal accumulative and $\Theta_2$ is maximal dissipative a similar reasoning applies.
As $\Theta_1-\Theta_2\in\cS_{p_0}(L^2(\partial\Omega))$ we also have
\begin{equation*}
\Re(\Theta_1-\Theta_2)\in\cS_{p_0}(L^2(\partial\Omega))\quad\text{and}\quad
\Im(\Theta_1-\Theta_2)\in\cS_{p_0}(L^2(\partial\Omega)),
\end{equation*}
and since $\Im\Theta_1\geq 0$ and $\Im\Theta_2\leq 0$ we conclude from the inequalities
\begin{equation*}
0\leq \Im\Theta_1\leq\Im(\Theta_1-\Theta_2)\quad\text{and}\quad 0\leq -\Im\Theta_2\leq\Im(\Theta_1-\Theta_2)
\end{equation*}
that also $\Im\Theta_i$, $i=1,2$, belong to $\cS_{p_0}(L^2(\partial\Omega))$. Therefore 
\begin{equation*}\Theta_i-\Re\Theta_i\in\cS_{p_0}(L^2(\partial\Omega))\quad\text{and}\quad
\sess(\Theta_i)=\sess(\Re\Theta_i),\qquad i=1,2,
\end{equation*}
and by the first part of the proof each of the resolvent differences
\begin{equation}\label{sum}
\begin{split}
&(-\Delta_{\Theta_1}^\Omega-\lambda)^{-1} - (-\Delta_{\Re\Theta_1}^\Omega-\lambda)^{-1},\\
&(-\Delta_{\Re\Theta_1}^\Omega-\mu)^{-1} - (-\Delta_{\Re\Theta_2}^\Omega-\mu)^{-1},\\
&(-\Delta_{\Re\Theta_2}^\Omega-\vartheta)^{-1} - (-\Delta_{\Theta_2}^\Omega-\vartheta)^{-1}
\end{split}
\end{equation}
belongs to $\cS_{p}(L^2(\Omega))$, where $p>\frac{(n-1)p_0}{n-1+3p_0}$. Moreover, the resolvents of 
$-\Delta_{\Theta_i}^\Omega$ and $-\Delta_{\Re\Theta_i}^\Omega$, $i=1,2$,
are all
compact and hence almost all $\lambda\in\CC$ belong to the intersection of the resolvent sets of these
generalized Robin Laplacians. Then it follows from \eqref{sum} that the difference of the resolvents of $-\Delta_{\Theta_1}^\Omega$ and 
$-\Delta_{\Theta_2}^\Omega$ satisfies \eqref{res_diff3}.
\end{proof}

\subsection*{Acknowledgements}
M.~Langer was supported by the Engineering and Physical Sciences Research
Council (EPSRC) of the UK, grant EP/E037844/1.
I.~Lobanov, V.~Lotoreichik and I.~Popov were supported by grant 2.1.1/4215
of the programme ``Development of the potential of High School in Russia 2009--2010''.
V.~Lotoreichik was also supported by the personal grant 2.1/30-04/035 of
the government of St.~Petersburg and the DAAD Leonard Euler programme, grant 50077360.


\end{document}